\documentclass[12pt, reqno]{amsart}
     \makeatletter
     \def\section{\@startsection{section}{1}%
     \z@{.7\linespacing\@plus\linespacing}{.5\linespacing}%
     {\bfseries
     \centering
     }}
     \def\@secnumfont{\bfseries}
     \makeatother
\setlength{\textheight}{19.5 cm}
\setlength{\textwidth}{13.0 cm}

\usepackage{amssymb}
\usepackage[final]{hyperref}
\usepackage{tikz}

\newtheorem{theorem}{Theorem}[section]
\newtheorem{prop}{Proposition}[section]
\newtheorem{lemma}[theorem]{Lemma}
\newtheorem*{prop*}{Proposition}

\newtheorem{definition}[theorem]{Definition}

\numberwithin{equation}{section}


\newcommand{\mbc}{{\mathbb C}}

\newcommand{\mbe}{{\mathbb E}}
\newcommand{\mbf}{{\mathbb F}}

\newcommand{\mbr}{{\mathbb R}}
\newcommand{\mbp}{{\mathbb P}}

\newcommand{\mbv}{{\mathbb V}}

\newcommand{\mca}{{\mathcal A}}
\newcommand{\mcb}{{\mathcal B}}
\newcommand{\mcf}{{\mathcal F}}
\newcommand{\mcg}{{\mathcal G}}

\newcommand{\mco}{{\mathcal O}}

\newcommand{\mcfh}{\mcf(H)}

\newcommand{\Tr}{{\rm Tr}}
\newcommand{\tr}{{\rm tr}}

\newcommand{\la}{{\langle}}

\newcommand{\ra}{{\rangle}}

\newcommand{\mc}[1]{\mathcal{#1}}
\newcommand{\norm}[1]{\left\Vert#1\right\Vert}
\newcommand{\chf}[1]{\mathbf{1}_{#1}}
\newcommand{\set}[1]{\left\{#1\right\}}


\begin{document}

\title[Quantum Free Yang-Mills on the Plane]{Quantum Free Yang-Mills on the Plane}


\author[M. Anshelevich] {Michael Anshelevich}
\address{Department of Mathematics, Texas A\&M University,
College Station, TX 77843}
\email{manshel@math.tamu.edu }
\urladdr{http://www.math.tamu.edu/$\sim$manshel/}
\thanks{Research supported by  US National Science Foundation Grant DMS-0900935.}

\author[A. N. Sengupta] {Ambar N.\ Sengupta}
\address{\hskip -.05in Department of Mathematics, Louisiana State University,
Baton Rouge, LA 70803}
\email{sengupta@math.lsu.edu}
\urladdr{http://www.math.lsu.edu/$\sim$sengupta}
\thanks{Research  supported by   US National Science Foundation  Grant DMS-0601141. }

\subjclass[2010]{Primary: 81T13; Secondary:  46L54 }
\date{June 2, 2011}


\begin{abstract} We construct a free-probability quantum Yang-Mills theory on the two dimensional plane,
determine the Wilson loop expectation values, and show that this theory is the $N=\infty$ limit of
$U(N)$ quantum Yang-Mills theory on the plane.
\end{abstract}
\maketitle

\section{Introduction}\label{s:intr}

 In this paper we use free probability and free stochastic calculus to construct a large-$N$ limit of $U(N)$ quantum Yang-Mills theory on the Euclidean plane $\mbr^2$.  While it has long been   expected that free probability should play a central role in describing large-$N$ limits of certain matrix-model quantum   theories  (see, for instance, Douglas \cite{Do97} and Gopakumar and Gross \cite{GG95}), the model we develop, along with the earlier work of Xu \cite{Xu97}   in this context, may be the first concrete rigorously developed example of a free-probability based geometric quantum field theory.

 Pure quantum Yang-Mills theory, with gauge group $U(N)$ and spacetime    $\mbr^2$, is described  by the Yang-Mills measure $\mu_g$ which is formally
 a measure   on gauge equivalence classes of connections $A$ and has formal  density
$e^{-\frac{1}{2g^2}|\!|F^A|\!|^2_{L^2}}$, where $F^A$ is the curvature of a connection form $A$ and $g$ a coupling constant. It has  been known,
at least since the work of `t Hooft \cite{tHft74}, that this theory has a meaningful and conceptually useful limit as $N\to\infty$, holding
$g^2N$ fixed.

There is a vast body of works in the physics literature on the large-$N$ limit of $U(N)$ gauge theories
(early works include those of Kazakov and Kostov \cite{KK80, KK81}).
On the mathematical side, Singer  \cite{Si95} showed that a mathematically
meaningful   `master field'  exists as the large-$N$ limit of two-dimensional $U(N)$ quantum Yang-Mills theory. Our free-probability Yang-Mills
theory may be viewed as a realization of this master field theory.  The large-$N$ limit of  Wilson loop expectations in quantum Yang-Mills on $\mbr^2$
was studied
by Xu \cite{Xu97} using the known distributions of the $U(N)$ Wilson loop expectation values. For a brief review see \cite{Sen08a}.

\section{Classical, Quantum, and Free}\label{s:fym}

In making comparisons  of our theory to classical differential geometric gauge theory, one should have in mind a $U(N)$ principal bundle over $\mbr^2$.
For convenience we may trivialize the bundle and take the space $\mathcal A$ of all  connections as the space of smooth $u(N)$-valued $1$-forms on $\mbr^2$, where $u(N)$ is the Lie algebra
of skew-hermitian $N\times N$ matrices.  The group $\mcg$ of gauge transformations consists of smooth maps $\phi:\mbr^2\to U(N)$ and acts on $\mca$ by
$$A^{\phi}={\phi}A{\phi}^{-1}+\phi d\phi^{-1}.$$
It is usually
more convenient to work with ${\mathcal G}_o$, the subgroup of ${\mathcal G}$  consisting of transformations which are the identity over  the basepoint
$o=(0,0)$.  Parallel transport  by $A$ along a piecewise smooth path
\[
c:[T_0,T_1]\to\mbr^2
\]
is given by $h_c(T_1)$ where $h_c:[T_0,T_1]\to U(N)$ solves
$$dh_c(t)=-A\bigl(c'(t)\bigr)h_c(t)\,dt\qquad\hbox{and}\qquad h_c(T_0)=I,$$
with $t$ running over $[T_0,T_1]$.  For any $A\in\mca$ there is a $\phi\in \mcg_o$ for which $A_{\rm rad}=A^\phi$ vanishes on radial vectors; this is
radial gauge fixing, and identifies $\mca/\mcg_o$ with the linear space $\mca_o$   of $u(N)$-valued functions   on $\mbr^2$ by associating
each $A$ to $f^{A_{\rm rad}}$ times the area $2$-form on $\mbr^2$. The equation of parallel transport then associates to each $f\in\mca_o$
and path $c:[T_0,T_1]\to\mbr^2:t\mapsto r_c(t)e^{it}$, the differential equation
\begin{equation}\label{e:dhcc}
dh_c(t)=i dM^f_c(t)h_c(t)\,\qquad\hbox{and}\qquad h_c(T_0)=I,\end{equation}
where $-iM^f_c(t)$ is the integral of $f$ times area-form over the cone
\begin{equation}\label{E:defSct}
S_c(t)=\{re^{i{\theta}}\,:\, \theta\in [T_0,t],\, 0\leq r\leq r_c({\theta})\}.\end{equation}
Primarily we are interested in loops $c$ based at the origin $o$, and then $h_c(T_1)$ is the classical {\em holonomy} of the connection on the loop $c$.

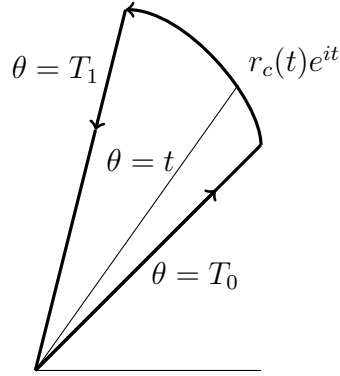
\begin{figure}
\begin{center}
 \begin{tikzpicture}[scale=2]
\draw (0,0) -- (1.5,0);

\draw [->,very thick](0,0) -- (1.2,1.2);

\draw[very thick]  (0,0) -- (1.5,1.5);

\draw [<-, very thick](0.4,1.6) -- (0.6, 2.4);

\draw [very thick](0.4,1.6) -- (0, 0);

 \draw  [->,very thick] (1.5,1.5) .. controls (1.5,1.8) and (1,2.4) .. (0.6,2.4);

 \draw (0,0)--(1.35,1.9);

\coordinate [label=left:$r_c(t)e^{it}$] (cout) at (2.1, 2.05);

\coordinate [label=left: ${\theta=t}$] (t) at (1,1.4);

\coordinate [label=right: ${\theta=T_0}$] (t0) at (.7,.63);

\coordinate [label=left: ${\theta=T_1}$] (t1) at (.5,2);

\end{tikzpicture}
\end{center}
\caption{A cross radial path }
\label{fig:crpcone}
\end{figure}

For quantum Yang-Mills theory with gauge group $U(N)$ the Yang-Mills measure is a probability measure specified formally by the expression
\begin{equation*}
d\mu_g(A)=\frac{1}{Z_g}e^{-\frac{1}{2g^2}|\!|F^A|\!|^2_{L^2}}\, [DA] \end{equation*}
where $F^A=dA+A\wedge A$ is the curvature of a connection form $A$, $g$ is a  parameter that may be viewed as a `coupling constant',
and $[DA]$ is formal Lebesgue measure on $\mca_o$.  Then $\mu_g$ is realized rigorously
as Gaussian measure
\begin{equation}\label{e:formalymN}
d\mu_g(f)=\frac{1}{Z_g}e^{-\frac{1}{2g^2}|\!|f|\!|^2_{L^2}}\, Df \end{equation}
on a suitable completion of $L^2(\mbr^2)\otimes u(N)$.  This will be reviewed in more detail below in section \ref{s:revN}, but for the sake of motivation for the free theory let us take a very quick look at the framework of the $U(N)$ theory. To each $f\in L^2(\mbr^2)$ is associated a $u(N)$-valued random variable
$ib_{N,f}$, where $b_{N,f}$ is an $N\times N$ Gaussian hermitian random matrix with mean $0$ and entries having variances determined
 by $\frac{1}{N}|\!|f|\!|^2_{L^2}$.  Following an idea of L.\ Gross \cite{GKS89} the classical differential
equation (\ref{e:dhcc}) is replaced by the It\^o  stochastic differential equation
\begin{equation}\label{e:dhcs}
dh_c(t)=  i dM_{N,c}(t)h_c(t)-\frac{1}{2}dM_{N,c}(t)^2h_c(t)\,\qquad\hbox{and}\qquad h_c(T_0)=I,\end{equation}
where now $M_{N,c}(t)$ is $b_{N,f}$ with $f=1_{S_c(t)}$. These equations were studied and the full theory of the quantum Yang-Mills measure on the plane
were developed by Gross et al. \cite{GKS89} and Driver \cite{Dr89}.

The large-$N$ limit of the white noise process $f\mapsto b_{N,f}$, as $N\to\infty$, can be described by using free probability \cite{Sen08}.
We proceed now to outline this framework very briefly and also describe the corresponding parallel transport process.

Let $H$ be the    Hilbert space $L^2(\mbr^2)$, and for $f\in H$ let
\begin{equation}\label{e:defbf}
b_f=a_f+c_f,\end{equation}
where $a_f=c_f^*$ and $c_f$ is the creation operator
\begin{equation}\label{e:defcf}c_f:{\mathcal F}(H)\to {\mathcal F}(H): x\mapsto f\otimes x \end{equation}
with $\mcf(H)$ being the full Fock space
\begin{equation}\label{e:defFH}
{\mathcal F}(H)=\bigoplus_{n= 0}^{\infty}H^{\otimes n}.\end{equation}
 We work in the closed subalgebra   $\mcb_{\mbr^2}$
of the $C^*$-algebra of  bounded linear operators on $\mcf(H)$ generated by all $b_f$, with the `expectation' operator $\tau(T)=\la T \Omega,\Omega\ra$, where $\Omega=1\in \mbc\subset\mcf(H)$.


For any Borel $S\subset\mbr^2$ there is the closed subalgebra $\mcb_S$ generated by $b_f$, with  $f$ zero off $S$. If $S_1,\ldots, S_m$ are mutually disjoint Borel subsets of $\mbr^2$, then $\mcb_{S_1},\ldots,\mcb_{S_m}$ are freely independent.  We view $f\mapsto b_f$ as a `free noise' process.
(It  makes sense  even if $\mbr^2$ is replaced by any measure space.)

Consider a  continuous  path
$$c:[T_0,T_1]\to\mbr^2 = \mbc:t\mapsto r_c(t) e^{it}$$
with $r_c\geq 0$.
Let
\begin{equation}\label{E:defMc}
M_c(t)=b_{1_{S_c(t)}},\end{equation}
where $S_c(t)$ is, as before, the cone:
\begin{equation}\label{E:defSct2}
S_c(t)=\{re^{i{\theta}}: \theta\in [T_0,t], 0\leq r\leq r_c(\theta)\}.\end{equation}
The {\em parallel-transport} process along the path $c$ is a path $t\mapsto u_c(t)\in B\bigl(\mcf(H)\bigr)$
solving the free stochastic differential equation
\begin{equation}\label{UtMt}du_c = i \bigl(dM_c \bigr) u_c    -\frac{1}{2}\bigl(dM_c \bigr)^2u_c,\qquad\hbox{with}\quad u_c(T_0)=I.\end{equation} We will study such equations below in section \ref{s:fs}.
 (In terms of the notation used later in (\ref{E:ux}), $u_c$ is $u_{M_c}$.) To avoid excessive notation we indulge in a small amount of notational ambiguity by denoting the parallel transport along the full path $c$ also by $u_c$:
\begin{equation}\label{E:defucT1}u_c=u_c(T_1).\end{equation}

 We identify paths which are reparametrizations of each other by `positive speed' (hence strictly increasing) smooth functions.
If $c:[0,1]\to\mbr^2$ is a path then its {\em reverse} is  $c^{-1}:[0,1]\to\mbr^2: t\mapsto c(1-t)$.  Following L\'evy \cite{Le10} we denote the initial point $c(0)$ by $\underline{c}$ and the final point $c(1)$ by $\overline{c}$.

A    path  of the form $[T_0,T_1]\to \mbc: t\mapsto r_c(t)e^{it}$ is {\em cross radial}. A  path of the form $[r_0,r_1]\to\mbr^2: r\mapsto re^{it}$, for some fixed $t\in \mbr$, and $0\leq r_0\leq r_1$, is {\em radial}.

We have already defined $u_c$ for cross radial $c$; for such $c$ we also define
\begin{equation}\label{e:defucbar}
u_{c^{-1}}=u_c^{-1}.\end{equation}

By a  {\em basic path} in $\mbr^2$ we shall mean a path which is the composite of radial paths, cross radial arcs, and their reverses, wth different paths intersecting at most at endpoints.  We do not distinguish between paths that are    reparametrizations of each other by smooth reparametrization functions of strictly positive derivative.  Moreover, by following a path as long as it is radial (or reverse) and then as long as it is cross radial (or reverse), and so on, every basic path is the composite of a unique sequence of radial/cross radial paths and their reverses.

The initial point of a path $c$  is denoted by $\underline{c}$, and the final point by $\overline{c}$.

Next let us define the notion of {\em backtrack equivalence}, following L\'evy \cite{Le10} who treats a more general situation.  Basic paths $c_1$ and $c_2$  are  {\em elementarily  equivalent}
  if there are basic paths $a$, $b$, $d$ such that
$$\{c_1, c_2\}=\{add^{-1}b, ab\}.$$
Thus, one of the $c_i$ is obtained from the other by erasing the backtracking part $dd^{-1}$.   Basic paths $c$ and $c'$ are {\em backtrack equivalent}, denoted
$$c\simeq_{\rm bt} c',$$
 if there is a sequence of basic paths $c=c_0, c_1,\ldots, c_n=c'$ where $c_i$ is elementarily equivalent to $c_{i+1}$  for $i\in\{0,\ldots, n-1\}$.

  Clearly, backtrack equivalence is in fact an equivalence relation.  Moreover,
  if
  $$a\simeq_{\rm bt} a'\quad\hbox{and}\quad b\simeq_{\rm bt} b'$$
  then
\begin{equation}\label{E:abapbp}
ab\simeq_{\rm bt} a'b',\end{equation}
  if the composites $ab$ and $a'b'$ exist.

\begin{definition}\label{d:hol}  For a basic curve $c:[T_0,T_1]\to\mbr^2$ we define
the {\em free parallel transport} along $c$ to be
\begin{equation}\label{E:defhol}
u_c=u_{c_m}\cdots u_{c_1}\end{equation}
if $c$ is the composite
$$c=c_m\cdots c_1,$$
where each $c_j$ or its reverse is radial or cross radial, and $u_{c_j}=1$ if $c_j$ is radial.  If $c$ is a loop, then we call $u_c$ the {\em free holonomy} around $c$.  \end{definition}

The definition of backtrack equivalence makes it clear that {\em holonomy}, in all its forms, {\em ignores backtracks}, in the sense that backtrack equivalent loops have the same holonomy.

\section{Free Stochastics}\label{s:fs}

In this section we introduce notions and prove results in free stochastic calculus that are motivated by and useful for our study of stochastic holonomy in the next sections.

Let $H$ be a   separable   Hilbert space and
$${\mathcal F}(H)=\bigoplus_{n\geq 0}H^{\otimes n}$$
the full Fock space over $H$ (if we start with a real Hilbert space we should complexify the Fock space). For any bounded linear operator $T$ on $\mcf(H)$ let
$$\tau(T)=\la T\Omega,\Omega\ra$$
where $\Omega=1\in\mbc\subset\mcf(H)$. For $f\in H$, let $c_f$ and $a_f$ be, respectively,
the creation and annihilation operators on ${\mathcal F}(H)$ specified by
\begin{equation}\label{E:defcfaf}\begin{split}
c_f(v_1\otimes\cdots\otimes v_n) &=f\otimes f_1\otimes\cdots\otimes f_n\\
 a_f(f_1\otimes\cdots\otimes f_n)&=\la f,f_1\ra
f_2\otimes\cdots\otimes f_n,\end{split}\end{equation}
with $a_f$ being $0$ on $  \mbc$. Let
\begin{equation}\label{E:defbf2}
b_f=a_f+c_f.\end{equation}

\begin{prop}\label{P:} Let $\mco$ be the norm-closed $C^*$-algebra of bounded operators on $\mcfh$ generated by the elements $b_f$ with $f$ running over
$H$. Then $\tau$ is faithful and tracial on $\mco$ and $\Omega=1\in\mbc$ is a cyclic vector, that is, $\{b_f\Omega\,|\,f\in H\}$ is a dense subspace
of $\mcfh$.

\end{prop}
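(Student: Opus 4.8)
The plan is to exploit the concrete structure of the full Fock space $\mcfh$ and the operators $b_f = a_f + c_f$. The claim has three parts: $\tau$ is tracial on $\mco$, $\tau$ is faithful on $\mco$, and $\Omega$ is cyclic for $\mco$. I would take them roughly in that order, since the cyclicity fact feeds the faithfulness argument.

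\emph{Cyclicity.} I would show directly that the linear span of vectors of the form $b_{f_1}b_{f_2}\cdots b_{f_n}\Omega$ is dense; in fact I claim $b_{f_1}\cdots b_{f_n}\Omega = f_1\otimes\cdots\otimes f_n + (\text{lower-rank terms})$. This follows by induction on $n$: applying $b_f = a_f + c_f$ to a vector $x\in H^{\otimes k}$ produces $f\otimes x \in H^{\otimes(k+1)}$ plus a term in $H^{\otimes(k-1)}$. So the leading term of $b_{f_1}\cdots b_{f_n}\Omega$ in $H^{\otimes n}$ is exactly $f_1\otimes\cdots\otimes f_n$, and a triangularity/induction-on-$n$ argument shows that the span of all such vectors contains each $H^{\otimes n}$ and hence is dense. (Here it is convenient to note the elementary tensors $f_1\otimes\cdots\otimes f_n$ span a dense subspace of $H^{\otimes n}$.)

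\emph{Traciality.} For this I would use Wick-type combinatorics. For $T\in\mco$ a word $b_{f_1}\cdots b_{f_n}$, expand $\tau(b_{f_1}\cdots b_{f_n}) = \la b_{f_1}\cdots b_{f_n}\Omega,\Omega\ra$ by inserting $b_f = a_f + c_f$ and using that only terms reaching $\Omega\in\mbc$ survive: this gives a sum over noncrossing pair partitions $\pi$ of $\{1,\dots,n\}$, weighted by products $\prod_{\{i<j\}\in\pi}\la f_i, f_j\ra$, where $i$ must be a $c$ and $j$ an $a$ (so $n$ even). The same expansion for the cyclically rotated word $b_{f_2}\cdots b_{f_n}b_{f_1}$ is a sum over noncrossing pair partitions of a cyclically rotated index set. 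The key observation is that cyclic rotation is a bijection on noncrossing pair partitions of an $n$-cycle that preserves the inner-product weights (the weight $\la f_i,f_j\ra$ is symmetric, so it does not matter which endpoint was ``creation'' and which was ``annihilation''). Summing, $\tau(b_{f_1}\cdots b_{f_n}) = \tau(b_{f_2}\cdots b_{f_n}b_{f_1})$, which by linearity and density gives $\tau(ST) = \tau(TS)$ for all $S,T$ in the dense $*$-subalgebra generated by the $b_f$'s, hence on all of $\mco$ by continuity of $\tau$.

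\emph{Faithfulness.} Let $T\in\mco$ with $\tau(T^*T) = 0$, i.e. $\|T\Omega\| = 0$, so $T\Omega = 0$. I want $T = 0$. Using traciality, $\tau(S^*T^*TS) = \tau(T^* T S S^*) $ — more usefully, for any $S$ in the $*$-algebra generated by the $b_f$, I have $\|TS\Omega\|^2 = \la S^*T^*TS\Omega,\Omega\ra = \tau(S^* (T^*T) S)$, and since $T^*T \geq 0$ and $\tau$ is tracial and positive, $\tau(S^*(T^*T)S) = \tau((T^*T)SS^*) \le \|SS^*\|\,\tau(T^*T) = 0$. Hence $TS\Omega = 0$ for all such $S$. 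By the cyclicity result, $\{S\Omega\}$ is dense in $\mcfh$, so $T = 0$. This shows $\tau$ is faithful.

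\emph{Main obstacle.} The routine parts (cyclicity, faithfulness given traciality) are short; the real content is the traciality computation. The hard part will be organizing the Wick/noncrossing-partition bookkeeping cleanly — in particular checking that cyclic rotation of the word corresponds to a weight-preserving bijection of the relevant noncrossing pairings, and that no ``would-be'' pairings are gained or lost under rotation (one must be careful that a pairing that is noncrossing for the linear order $1,\dots,n$ stays noncrossing, in the appropriate circular sense, after rotation). An alternative to the explicit combinatorics is to identify $(\mco,\tau)$ with a free product of copies of $(L^\infty[-2,2],\text{semicircle})$ — since the $b_f$ for an orthonormal family are free semicircular elements — and invoke traciality of free products of tracial von Neumann algebras; I would mention this as a remark but carry out the self-contained Fock-space argument as the main proof.
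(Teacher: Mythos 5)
The paper does not actually prove this proposition --- it just cites Section 2.6 of \cite{VDN} --- so your self-contained Fock-space argument is a genuinely different (and more informative) route; it is, in substance, the standard argument from that reference. The cyclicity step (triangularity of $b_{f_1}\cdots b_{f_n}\Omega=f_1\otimes\cdots\otimes f_n+(\text{lower rank})$) and the faithfulness step (traciality plus cyclicity via $\|TS\Omega\|^2=\tau\bigl((T^*T)SS^*\bigr)\le\|SS^*\|\,\tau(T^*T)$) are complete as you describe them, and the Wick expansion over noncrossing pairings together with the rotation-invariance of $\NC_2(n)$ is the right engine for traciality.

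The one genuine gap sits exactly at the point you single out: the claim that the weight $\la f_i,f_j\ra$ is symmetric. On a complex Hilbert space $\la f,g\ra=\overline{\la g,f\ra}$, and indeed $\tau(b_fb_g)=\la f,g\ra$ while $\tau(b_gb_f)=\la g,f\ra$; worse, if $f$ ranges over all of a complex $H$ then the algebra contains $c_f=\tfrac12(b_f-ib_{if})$ and $a_f$ separately, and $\tau(c_fa_f)=0\neq\|f\|^2=\tau(a_fc_f)$, so $\tau$ is neither tracial nor faithful there. The proposition (and your rotation bijection, which swaps the creator/annihilator roles within any pair containing the rotated index and hence conjugates that weight) is correct only when $f$ is restricted to a real subspace $H_\mbr$ with $H=H_\mbr+iH_\mbr$, so that all $\la f_i,f_j\ra$ are real. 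That is the hypothesis of \cite{VDN}, Section 2.6, and it is what the paper implicitly uses (all its $f$'s are indicator functions); you should state it explicitly, since it is what makes the symmetry claim true rather than a convenience. A minor bookkeeping slip that does not affect the outcome: since operators act right to left, in a pair $\{i<j\}$ it is position $j$ that creates and position $i$ that annihilates, the opposite of what you wrote; with the paper's convention $a_f(g\otimes\cdots)=\la f,g\ra(\cdots)$ the resulting weight is indeed $\la f_i,f_j\ra$.
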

\begin{proof}
See Section 2.6 of \cite{VDN}.
\end{proof}

We will work henceforth with the algebraic probability space $(\mco,\tau)$.

By a {\em free Brownian motion} $x$ on $(\mco,\tau)$ we shall mean a path
$x:[T_0,T_1]\to\mco$, for some $T_0, T_1\in (0,\infty)$ with $T_0<T_1$, for which
\begin{itemize}
\item[(i)] each $x(t)$ is real (hermitian), and $x(T_0)=0$;
\item[(ii)] if $t_0\leq t_1 \leq \ldots \leq t_n$ in $[T_0,T_1]$ then
\[
x(t_0), \ x(t_1)-x(t_0), \ldots, x(t_n) - x(t_{n-1})
\]
are freely independent;
\item[(iii)] there is a strictly increasing continuous function $A_x:[T_0,T_1]\to [0,\infty)$,
with $A_x(T_0)=0$,  such that if $0\leq t_0<t_1$ then $x(t_1)-x(t_0)$ has centered semicircular distribution with variance $A_x(t_1)-A_x(t_0)$.
\end{itemize}

The function $A_x(\cdot)$ will be called the {\em clock} of the free Brownian motion. If $A_x(t)=t$ for all $t\in [T_0,T_1]$, then the process is the {\em standard} free
Brownian motion on $(\mco,\tau)$.  The clock can be recovered from the free Brownian motion from
\begin{equation}\label{E:clockform}
(dx)^2=dA_x, \end{equation}
where, on the left we have the square of a free stochastic differential.


\begin{lemma}\label{L:sumbrown}
Suppose $x$ and $z$ are freely independent free Brownian motions, i.e.,   the algebra  generated by $x([T_0,T_1])$ is freely independent of the algebra generated by   $z([T_0,T_1])$.
Then $x+z$ is also a free Brownian motion, with the clock being the sum of the clocks for $x$ and $z$. \end{lemma}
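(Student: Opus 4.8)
The plan is to verify the three defining properties of a free Brownian motion for $y := x+z$, using freeness of the two processes in an essential way for properties (ii) and (iii).

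First I would check property (i): each $y(t) = x(t)+z(t)$ is a sum of hermitian elements, hence hermitian, and $y(T_0) = x(T_0)+z(T_0) = 0$. This is immediate.

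Next, property (ii): given $t_0 \le t_1 \le \cdots \le t_n$, I must show $y(t_0), y(t_1)-y(t_0), \ldots, y(t_n)-y(t_{n-1})$ are freely independent. Write $\xi_k = x(t_k)-x(t_{k-1})$ and $\zeta_k = z(t_k)-z(t_{k-1})$ (with $\xi_0 = x(t_0)$, $\zeta_0 = z(t_0)$), so the increments of $y$ are $\xi_k+\zeta_k$. The family $\{\xi_0,\ldots,\xi_n\}$ is free (property (ii) for $x$), the family $\{\zeta_0,\ldots,\zeta_n\}$ is free (property (ii) for $z$), and the algebra generated by the $\xi$'s is free from the algebra generated by the $\zeta$'s (this is the hypothesis that $x([T_0,T_1])$ and $z([T_0,T_1])$ generate free algebras). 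The conclusion I want is that the \emph{pairwise sums} $\xi_k+\zeta_k$ form a free family. This follows from the general associativity/amalgamation principle for freeness: if $(\mathcal{A}_k)$ are free subalgebras and $(\mathcal{B}_k)$ are free subalgebras, and the algebra generated by all $\mathcal{A}_k$ is free from the algebra generated by all $\mathcal{B}_k$, then the subalgebras $\mathcal{C}_k := \mathrm{alg}(\mathcal{A}_k \cup \mathcal{B}_k)$ are free. I would cite this standard fact (e.g.\ from \cite{VDN}); alternatively it can be proved directly by checking that an alternating product of centered elements drawn from the $\mathcal{C}_k$ has vanishing trace, expanding each $\mathcal{C}_k$-element into its $\mathcal{A}_k$ and $\mathcal{B}_k$ components and using the two-level freeness to collapse the word.

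Finally, property (iii): for $t_0 < t_1$ the increment $y(t_1)-y(t_0) = (x(t_1)-x(t_0)) + (z(t_1)-z(t_0))$ is a sum of two \emph{freely independent} semicircular elements, of variances $A_x(t_1)-A_x(t_0)$ and $A_z(t_1)-A_z(t_0)$ respectively. The free convolution of two centered semicircular distributions is again centered semicircular with variance the sum of the variances (additivity of free cumulants, or equivalently of the $R$-transform, which for a semicircular of variance $\sigma^2$ is $R(\omega)=\sigma^2\omega$). Hence $y(t_1)-y(t_0)$ is centered semicircular with variance $\bigl(A_x(t_1)-A_x(t_0)\bigr) + \bigl(A_z(t_1)-A_z(t_0)\bigr)$, so setting $A_y := A_x + A_z$ — which is continuous, strictly increasing (sum of two such functions), and vanishes at $T_0$ — exhibits $A_y$ as a valid clock. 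This proves $y$ is a free Brownian motion with clock $A_x + A_z$, as claimed.

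The main obstacle is the middle step: the passage from "the $\xi_k$ are free, the $\zeta_k$ are free, and the two collections are jointly free" to "the sums $\xi_k+\zeta_k$ are free." This is exactly the non-commutative analogue of the fact that sums of independent random variables inherit independence from a product-independence structure, and while it is standard, it is the only place where one genuinely uses the structure of freeness rather than bookkeeping; everything else reduces to the $R$-transform computation and trivial closure properties. I would make sure to state the amalgamation lemma precisely and give a reference or a one-line word-collapsing argument.
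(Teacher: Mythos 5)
Your verification is correct; the paper itself omits the proof of this lemma as ``straightforward,'' and your three-step check (hermiticity, freeness of the summed increments via the associativity of free independence, and additivity of semicircular distributions under free convolution) is exactly the intended routine argument. The middle step you flag can also be done in one line with free cumulants: all mixed cumulants among the $\xi$'s and $\zeta$'s vanish except those with all arguments equal to a single $\xi_k$ or a single $\zeta_k$, so by multilinearity the mixed cumulants of the sums $\xi_k+\zeta_k$ vanish and their second cumulants add.
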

We omit the proof, which is straightforward.

We will make much use of a multiplicative analog of free brownian motion.  First consider a free   Brownian process
\begin{equation}\label{E:freebm}
[T_0,T_1]\to\mco:t\mapsto b(t)\qquad\hbox{with $b(0)=0$,}\end{equation}
clocked by $t-T_0$.
The  corresponding {\em free multiplicative brownian motion} is a process $[T_0,T_1]\to \mco:t\mapsto v(t)$ satisfying the free stochastic differential equation
\begin{equation}\label{E:freembm}
du_b(t)=idt\,u_b(t)-\frac{1}{2}u_b(t)dt,\qquad\hbox{with $u_b(T_0)=I$.}\end{equation}
 That this equation has a unique solution follows by the results of K\"um\-mer\-er and Speicher  \cite[section 5]{KS92}.   Biane \cite[Theorem 2]{BiaMultiplicativeBM} (see also     \cite[Lemma 6.3]{BV92}) shows that
 the moments of $u_b(t)$ are given by
 \begin{equation}\label{E:momvt}
 \tau\bigl(u_b(t)^k\bigr) =e^{- \frac{k}{2}t}P_k(t),
\end{equation}
for all non-negative integers $k$,
where   $P_k(t)$ is the polynomial of degree $k-1$ in $t$ that, for $k\geq 1$, satisfies
\begin{equation}\label{diffeqPk}
\frac{dP_k(t)}{dt}=-\frac{k}{2}\sum_{j=1}^{k-1}P_j(t)P_{k-j}(t),
\end{equation}
with $P_k(0)=1$, and $P_1(t)=P_0(t)=1$. (The distribution of $u_b(t)^\ast$ is the same as that $u_b(t)$.)    An explicit formula for $P_k(t)$ is given by Biane \cite[Remark on p. 267]{Bia97b}\cite[Lemma 1]{BiaMultiplicativeBM} and Singer \cite{Si95}\begin{equation}\label{E:Pkt}
P_k(t)=\sum_{k=0}^{n-1}(-1)^k\frac{t^k}{k!}n^{k-1}\binom{n}{k+1}.\end{equation}
 and can be expressed using an associated Laguerre polynomial $ \frac{1}{n} L_{n-1}^{(1)}(nt)$.

 We restate this and more as:

\begin{prop}\label{P:unitbr}  Suppose $x:[T_0,T_1] \rightarrow \mco$ is a free Brownian motion in $(\mco, \tau)$. Then there is a unique mapping $[T_0,T_1]\to \mco:t\mapsto u_x(t)$, with $u_x(T_0)=I$, satisfying
\begin{equation}\label{E:ux}
du_x  = i \,dx\, u_x -\frac{1}{2} u_x \,dA_x \end{equation}
The element $u(t)$ is unitary and belongs to the $C^*$-algebra generated by $x([T_0,t])$, for all $t\in [T_0,T_1]$.
Moreover, for $T_0\leq s\leq t<T_1$, the multiplicative increments $u_x(t) u_x(s)^\ast$ are freely independent and have stationary distributions, depending only of $A_x(t)-A_x(s)$. Specifically, the distribution of $u(t)u(s)^*$ is    multiplicative semicircular with parameter $A_x(t)-A_x(s)$, specified  explicitly by
\begin{equation}
\label{E:distribux}
\tau\bigl((u(t)u(s)^*)^k\bigr)= P_k\bigl(n [A_x(t)-A_x(s)]\bigr)e^{-k[A_x(t)-A_x(s)]/2}\end{equation}
for all non-negative integers $k$.
\end{prop}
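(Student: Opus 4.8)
The plan is to reduce Proposition \ref{P:unitbr} to the case of standard free Brownian motion, for which the relevant facts are already available from Biane and from K\"ummerer--Speicher as recorded in equations (\ref{E:freembm})--(\ref{E:Pkt}). The key observation is that the clock $A_x$ furnishes a deterministic time change: since $A_x:[T_0,T_1]\to[0,\infty)$ is strictly increasing and continuous with $A_x(T_0)=0$, it has a continuous strictly increasing inverse, and the process $s\mapsto \tilde x(s) = x\bigl(A_x^{-1}(s)\bigr)$ is, by properties (i)--(iii) in the definition of free Brownian motion, a standard free Brownian motion on $(\mco,\tau)$. Conversely $x(t) = \tilde x(A_x(t))$. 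So the whole statement transports across this reparametrization, and $u_x(t)$ should be defined as $\tilde u(A_x(t))$, where $\tilde u$ solves (\ref{E:freembm}) driven by $\tilde x$ with unit clock.

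First I would establish existence and uniqueness of the solution to (\ref{E:ux}). For the standard case, (\ref{E:freembm}) has a unique solution by \cite[section 5]{KS92}; I would then check that $t\mapsto \tilde u(A_x(t))$ satisfies (\ref{E:ux}), using the chain rule for free It\^o differentials together with the quadratic variation identity $(dx)^2 = dA_x$ from (\ref{E:clockform}): substituting $s = A_x(t)$ into $d\tilde u = i\,d\tilde x\,\tilde u - \tfrac12 \tilde u\,ds$ yields exactly $du_x = i\,dx\,u_x - \tfrac12 u_x\,dA_x$. Uniqueness for (\ref{E:ux}) follows by running the same change of variables in reverse: any solution pulls back to a solution of (\ref{E:freembm}). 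Next, unitarity: one computes $d(u_x u_x^\ast)$ from (\ref{E:ux}) and its adjoint, again using $(dx)^2 = dA_x$, and finds that the drift and martingale parts cancel, so $u_x u_x^\ast$ is constant, equal to its value $I$ at $T_0$; similarly for $u_x^\ast u_x$. The membership of $u_x(t)$ in the $C^*$-algebra generated by $x([T_0,t])$ is immediate from the construction of the solution as a limit (e.g. Picard iteration or the product-integral limit) of polynomials in increments of $x$ up to time $t$.

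For the distributional claims, I would argue as follows. Fix $T_0 \le s \le t < T_1$. The increment $u_x(t)u_x(s)^\ast$ equals, under the time change, $\tilde u(A_x(t))\tilde u(A_x(s))^\ast$, and by the free independence and stationarity of increments of standard free multiplicative Brownian motion (\cite[Theorem 2]{BiaMultiplicativeBM}, see also \cite[Lemma 6.3]{BV92}), this depends in distribution only on $A_x(t) - A_x(s)$ and is freely independent of $\{\tilde u(r) : r \le A_x(s)\}$, hence of the $C^*$-algebra generated by $x([T_0,s])$. The explicit moment formula (\ref{E:distribux}) is then read off from (\ref{E:momvt}): with $\delta = A_x(t) - A_x(s)$ one has $\tau\bigl((u_x(t)u_x(s)^\ast)^k\bigr) = e^{-k\delta/2} P_k(\delta)$, and the appearance of $n$ inside $P_k$ in the statement is a matter of the normalization convention for the clock in the application (the "$\tilde g^2$" scaling), absorbed into $\delta$; I would simply match conventions at this point rather than re-derive $P_k$.

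The main obstacle I anticipate is making the free-It\^o chain rule under a continuous, possibly non-smooth deterministic time change fully rigorous — in particular justifying that $s\mapsto\tilde x(s)$ is genuinely a standard free Brownian motion (this needs property (iii) applied along with the continuity of $A_x$ to get semicircular increments of the right variance) and that the product-integral/Picard solution commutes with the reparametrization. This is conceptually routine but is the place where care is required; everything downstream (unitarity, the algebra containment, free independence and stationarity of multiplicative increments, and the moment formula) then follows mechanically from the already-cited results on the standard free multiplicative Brownian motion.
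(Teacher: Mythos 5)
Your proposal is correct and is essentially the paper's own argument: the paper's proof consists precisely of observing that $\tilde x(s)=x\bigl(A_x^{-1}(s)\bigr)$ is a standard free Brownian motion and then invoking Theorem~2 of Biane's paper on free multiplicative Brownian motion applied to $\tilde x$. Your additional elaborations (unitarity, algebra membership, the moment formula, and the remark that the stray $n$ in (\ref{E:distribux}) is a normalization artifact) are consistent with, and simply flesh out, that same time-change reduction.
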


We   call the process $t\mapsto u_x(t)$ the {\em free multiplicative Brownian motion generated by} $x$.
Looking back at (\ref{UtMt}) we see that $u_c$ is the same as $u_{M_c}$, in this notation.

\begin{proof} The process ${\tilde x}: s\mapsto  x\bigl(A_x^{-1}(s)\bigr)$ is free   Brownian motion with clock $s$. Then the result follows
  from Theorem 2 of \cite{BiaMultiplicativeBM} applied to ${\tilde x}$. \end{proof}

\begin{lemma}
\label{L:invar}
Suppose $z, u\in \mco$, with $u$ unitary lying in a $\ast$-closed algebra $\mcb\subset\mco$ which is freely independent of $z$. Then
$u^\ast z u$ is freely independent of $\mcb$ and has the same distribution as $z$. 
\end{lemma}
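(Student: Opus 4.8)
The plan is to realize conjugation by $u$ as a trace-preserving $\ast$-automorphism of $\mco$ and then transport both freeness and the $\ast$-distribution across it; this reduces the whole statement to two formal observations.

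First I would note that, since $u$ and $u^*$ lie in $\mco$ and $u$ is unitary, the map $\Phi\colon\mco\to\mco$, $\Phi(x)=u^*xu$, is a $\ast$-automorphism of $\mco$, with inverse $x\mapsto uxu^*$. Using only that $\tau$ is tracial on $\mco$ (Proposition \ref{P:}) and that $u$ is unitary, $\Phi$ preserves $\tau$: $\tau(u^*xu)=\tau(xuu^*)=\tau(x)$ for every $x\in\mco$. Since $u\in\mcb$ and $\mcb$ is $\ast$-closed, $\Phi$ carries $\mcb$ onto itself, $\Phi(\mcb)=u^*\mcb u=\mcb$.

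Next I would record the two formal consequences. (i) The element $\Phi(z)=u^*zu$ has the same $\ast$-distribution as $z$: collapsing the interior factors $uu^*$ shows that any joint $\ast$-moment satisfies $\tau\bigl((u^*zu)^{\varepsilon_1}\cdots(u^*zu)^{\varepsilon_n}\bigr)=\tau\bigl(u^*z^{\varepsilon_1}\cdots z^{\varepsilon_n}u\bigr)=\tau\bigl(z^{\varepsilon_1}\cdots z^{\varepsilon_n}\bigr)$ by traciality, for $\varepsilon_i\in\{1,\ast\}$. (ii) If $\mca_1,\mca_2\subset\mco$ are freely independent, then so are $\Phi(\mca_1)$ and $\Phi(\mca_2)$: an alternating product of $\tau$-centered elements drawn from $\Phi(\mca_1),\Phi(\mca_2)$ is the $\Phi$-image of an alternating product of $\tau$-centered elements drawn from $\mca_1,\mca_2$ — centeredness is preserved since $\tau\circ\Phi=\tau$ — and the latter has vanishing trace by freeness, hence so does the former since $\Phi$ preserves $\tau$.

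Finally I would apply (i) and (ii) with $\mca_1$ the $C^*$-subalgebra of $\mco$ generated by $z$ (which is freely independent of $\mcb$ by hypothesis) and $\mca_2=\mcb$. Since $\Phi$ is a $\ast$-isomorphism, $\Phi(\mca_1)$ is the $C^*$-subalgebra generated by $u^*zu$, and $\Phi(\mca_2)=\mcb$; thus $u^*zu$ generates an algebra freely independent of $\mcb$ by (ii), and has the same distribution as $z$ by (i). The only things needing a moment's care are confirming that $\Phi$ is genuinely an automorphism of $\mco$ (not merely a linear map) and fixing the convention that ``freely independent of $z$'' is used at the level of the generated algebra; neither is a real obstacle. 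A bare-hands alternative — expanding an alternating centered word in $u^*zu$, $u^*z^*u$ and centered elements of $\mcb$, collapsing each interior $uu^*$, and moving the outermost $u^{\pm 1}$ cyclically — also works, but needs a small case analysis at the two ends of the word (with one extra recentering when the word both begins and ends in $\mcb$), which the automorphism argument avoids.
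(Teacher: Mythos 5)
Your proposal is correct, and it reaches the conclusion by a slightly different packaging of the same ingredients. The paper argues bare-hands: traciality gives $\tau\bigl((u^\ast z u)^n\bigr)=\tau(z^n)$, and for freeness it takes centered polynomials $P_i(u^\ast z u)=u^\ast P_i(z)u$ and centered $b_i\in\mcb$, cyclically moves the outermost $u^\ast$, and recognizes
$\tau\bigl(P_1(z)(ub_1u^\ast)P_2(z)(ub_2u^\ast)\cdots P_n(z)(ub_nu^\ast)\bigr)=0$
as an instance of the assumed freeness of $z$ from $\mcb$, using that $ub_iu^\ast\in\mcb$ and $\tau(ub_iu^\ast)=\tau(b_i)=0$; it then declares the remaining verifications standard. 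You instead observe that $\Phi(x)=u^\ast x u$ is a $\tau$-preserving $\ast$-automorphism of $\mco$ which maps $\mcb$ onto itself (this uses exactly $u\in\mcb$, $\mcb$ $\ast$-closed, and traciality, i.e.\ the same facts the paper uses), and that distribution and freeness are invariants of such automorphisms; applying this to the pair (algebra generated by $z$, $\mcb$) gives both conclusions at once. The underlying cancellations are identical, so this is close in spirit to the paper's proof, but your framing has two small advantages: it disposes of all alternating patterns (words beginning or ending in $\mcb$, the recentering at the ends) uniformly rather than exhibiting only the one pattern the paper displays, and it yields the full $\ast$-distribution statement rather than just the moments $\tau(z^n)$ (immaterial here since $z$ is self-adjoint in the application). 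The paper's direct computation, in turn, is shorter and makes visible exactly where the hypothesis that $z$ is free from $\mcb$ enters. Your identification $\Phi(\mcb)=\mcb$ is justified (both inclusions follow since $u,u^\ast\in\mcb$), and your remark about the convention that freeness from the element $z$ means freeness from the algebra it generates matches how the lemma is used later, so there is no gap.
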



\begin{proof} The argument is standard.  Since $\tau$ is a trace, and $u$ is unitary,
\[
\tau \Bigl( (u^\ast z u)^n \Bigr) = \tau(z^n);
\]
thus, $u^\ast z u$ has the same distribution as $z$. Similarly, let $P_1, P_2, \ldots, P_n$ be polynomials such that $\tau(P_i(u^\ast z u)) = \tau(P_i(z)) = 0$. Then for any $b_1, \ldots, b_n \in \mcb$ with $\tau(b_i) = 0$, we have
\[
\begin{split}
& \tau \Bigl( P_1(u^\ast z u) b_1 P_2(u^\ast z u) b_2 \ldots P_n(u^\ast z u) b_n \Bigr) \\
&\quad = \tau \Bigl( P_1(z) (u b_1 u^\ast) P_2(z) (u b_2 u^\ast) \ldots P_n(z) (u b_n u^\ast) \Bigr) = 0
\end{split}
\]
since $z$ is freely independent from $\mcb$ and $\tau(u b u^\ast) = \tau(b)$. It follows that $u^\ast z u$ is freely independent from $\mcb$ as well.
\end{proof}

\begin{prop}\label{P:invar} Suppose $z: [T_0,T_1]\to\mco$ is  a free Brownian motion, $u:[T_0,T_1]\to\mco$ a map,
and $\mcb$ is a   subalgebra of $\mco$ such that each $u(t)$  is a unitary and belongs to $\mcb$, and the algebra generated by
$z([T_0,T_1])$ is freely independent of $\mcb$. Then the   equation
\begin{equation}
\label{E:vuz}
dz^u(t)= u(t)^\ast \,dz(t)\, u(t)
\end{equation}
has a unique solution satisfying $z^u(T_0)=I$, and $z^u$ is a free Brownian motion with the same clock as $z$. Moreover,   $\{z^u(t)\,:\, t\in [T_0,T_1]\} $ is freely independent of $\mcb$.
\end{prop}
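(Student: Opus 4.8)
The plan is to recognize (\ref{E:vuz}) as an explicit free stochastic integral, to reduce the statement to a claim about the joint distribution of the process $z^u$ and the algebra $\mcb$, and to prove that claim by discretizing in time and appealing to an invariance lemma of the type of Lemma~\ref{L:invar}. Note first that $z^u$ does not appear on the right-hand side of (\ref{E:vuz}), so the equation is not a genuine fixed-point problem: its solution with $z^u(T_0)=0$ is simply the free stochastic integral
\[
z^u(t)=\int_{T_0}^{t}u(s)^{\ast}\,dz(s)\,u(s),\qquad T_0\le t\le T_1 .
\]
To see this makes sense, let $\mca_t$ denote the subalgebra of $\mco$ generated by $\mcb$ together with $z([T_0,t])$. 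Associativity of free independence — $\mcb$ is free from the algebra generated by $z([T_0,T_1])$, and inside the latter each increment $z(t)-z(s)$ is free from the algebra generated by $z([T_0,s])$ — shows that $z(t)-z(s)$ is free from $\mca_s$ whenever $T_0\le s\le t$, so $z$ is a free Brownian motion relative to the filtration $(\mca_t)$. Each $u(t)$ lies in $\mcb\subseteq\mca_{T_0}$ and has norm $1$, so the integrands $u(\cdot)^{\ast}$ and $u(\cdot)$ are bounded and adapted, and the theory of free stochastic integration against free Brownian motion \cite{KS92} produces a unique bounded $z^u(t)$, continuous in $t$. Self-adjointness of each $z^u(t)$ and $z^u(T_0)=0$ are immediate, since $dz$ is self-adjoint and $(u^{\ast}\,dz\,u)^{\ast}=u^{\ast}\,dz\,u$; and from (\ref{E:clockform}) together with $uu^{\ast}=I$ one computes $(dz^u)^2=u^{\ast}(dz)^2u=u^{\ast}\,dA_z\,u=dA_z$, since $dA_z$ is scalar, so once $z^u$ is known to be a free Brownian motion its clock is $A_z$. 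The proposition therefore reduces to showing that the pair consisting of the process $z^u$ and the algebra $\mcb$ has the same joint distribution as the pair $(z,\mcb)$ — and $z$ is, by hypothesis, a free Brownian motion that is free from $\mcb$.

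I would establish this first for a \emph{step} integrand. Fix a partition $T_0=t_0<\dots<t_n=T_1$ and put $u(t)=u_j\in\mcb$, each $u_j$ unitary, for $t\in[t_j,t_{j+1})$. On each cell the integral equation reads $z^u(t)-z^u(t_j)=u_j^{\ast}\bigl(z(t)-z(t_j)\bigr)u_j$, so the algebra generated by $z^u([T_0,T_1])$ is the join of the conjugated increment algebras $u_j^{\ast}\mc{A}_ju_j$, where $\mc{A}_j$ is generated by $z(t_{j+1})-z(t_j)$. By associativity of freeness, $\mcb,\mc{A}_0,\dots,\mc{A}_{n-1}$ are all free, and the needed input is the following generalization of Lemma~\ref{L:invar}: if $\mcb$ is free from a family $\mc{A}_0,\dots,\mc{A}_{n-1}$ of mutually free subalgebras and $u_0,\dots,u_{n-1}\in\mcb$ are unitary, then $\mcb,\,u_0^{\ast}\mc{A}_0u_0,\,\dots,\,u_{n-1}^{\ast}\mc{A}_{n-1}u_{n-1}$ are again all free. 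Granting this — and using that by traciality $u_j^{\ast}\bigl(z(t)-z(s)\bigr)u_j$ has the same distribution as $z(t)-z(s)$, and that a sum of freely independent centered semicircular elements is centered semicircular with the variances added — a partition-refinement argument shows that a step $z^u$ has freely independent increments over arbitrary times, each increment semicircular with variance $A_z(t)-A_z(s)$, and that the algebra it generates is free from $\mcb$; that is, a step $z^u$ is a free Brownian motion with clock $A_z$ that is free from $\mcb$. For a general (measurable and bounded; norm-continuous in all our applications) adapted $u$ I would approximate $u$ by step integrands $u^{(m)}$ in the $L^2$-norm for which free stochastic integration is isometric, so that $z^{u^{(m)}}(t)\to z^u(t)$ in $\|\cdot\|_2$ for each $t$; since each $z^{u^{(m)}}(t)$ is semicircular of variance $A_z(t)-A_z(T_0)$ it is bounded in norm uniformly in $m$, hence all joint $\tau$-moments of finitely many $z^{u^{(m)}}(t_i)$'s and elements of $\mcb$ converge to the corresponding moments of $z^u$; as these limiting moments are exactly those of a free Brownian motion with clock $A_z$ that is free from $\mcb$, the proposition follows.

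The step that requires real work is the freeness lemma above: conjugating mutually free subalgebras by unitaries drawn from an algebra that is free from all of them preserves their joint freeness, \emph{including} joint freeness with the conjugating algebra. This is proved exactly in the style of Lemma~\ref{L:invar} — one tests against a centered alternating word, writes out the conjugations, and re-centers the $\mcb$-factors of the form $u_j\,b\,u_{j+1}^{\ast}$ that appear between consecutive conjugated blocks, reducing the trace to a sum of shorter alternating moments that vanish by induction — but it must be carried out for words of arbitrary length rather than the one-variable case treated in Lemma~\ref{L:invar}, and that bookkeeping is the only genuinely fussy part of the argument. Alternatively, one could bypass the distributional computation altogether: by the first paragraph $z^u$ is a bounded self-adjoint free martingale relative to $(\mca_t)$ with deterministic quadratic variation $A_z$, and the free analogue of L\'evy's characterization theorem then identifies it as a free Brownian motion with clock $A_z$ that is free from the initial algebra $\mca_{T_0}\supseteq\mcb$.
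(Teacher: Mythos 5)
Your proposal is correct in outline and follows the same skeleton as the paper: realize $z^u$ as the free stochastic integral of the biprocess $u^\ast\otimes u$ against $dz$, approximate by left-endpoint step integrands over a partition, conjugate the semicircular increments by unitaries, identify the joint law, and pass to the limit. The one place you diverge is exactly the step you flag as "genuinely fussy": you posit a multivariable lemma (conjugating mutually free algebras $\mc{A}_0,\dots,\mc{A}_{n-1}$ by unitaries from a free algebra $\mcb$ preserves joint freeness with $\mcb$) and leave its alternating-word proof as a sketch. The paper needs no such generalization: it applies the one-variable Lemma~\ref{L:invar} sequentially, with $\mcb$ replaced by the growing algebra $\mco_{i-1}=C^\ast(\mcb,z(t_1),\dots,z(t_{i-1}))$. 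Since $\mco_{i-1}$ contains $\mcb$, all the conjugators $u(t_j)$, and all earlier conjugated increments, freeness of the $i$-th conjugated increment from $\mco_{i-1}$ already yields the joint freeness of the whole tuple together with $\mcb$, and the joint distribution follows because the joint law of a free family is determined by the marginals; your multivariable lemma is true but itself drops out of this same filtration trick, so the deferred bookkeeping can be avoided entirely (your own remark about the filtration $(\mca_t)$, and the free L\'evy-characterization alternative, point in this direction; the paper records the filtration version as a separate proposition rather than invoking a martingale characterization). The remaining difference is cosmetic: the paper takes the integral as the operator-norm limit of the Riemann sums, while you use the $L^2$ isometry plus uniform norm bounds to pass mixed moments to the limit; both work, and both implicitly use adaptedness and continuity of $u$ as in the application $u=u_x$.
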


\begin{proof} The existence of a unique solution follows from the general results of  \cite{KS92} (see also Section~4 of \cite{Bia97b}). Alternatively, we can write \cite{BiaSpeBrownian}:
\[
z^u(t) = \int_{T_0}^t (u(t)^\ast \otimes u(t)) \sharp \,dz(t).
\]
Observe that
\[
\norm{(u^\ast \otimes u) \chf{[T_0,t)}}_{\mcb_\infty} = (t-T_0)^{1/2} < \infty,
\]
and the biprocess is clearly adapted, so the integral is well-defined. Now choose a partition
\[
T_0 = t_0 < t_1 < \ldots < t_n = t.
\]
Denote $\mco_i = C^\ast(\mc{B}, z(t_1), \ldots, z(t_i))$. Then Lemma~\ref{L:invar} says that for each $i$,
\[
u(t_{i-1})^\ast (z(t_i) - z(t_{i-1})) u(t_{i-1})
\]
is freely independent from $\mco_{i-1}$ and has the same distribution as $z(t_i) - z(t_{i-1})$. Therefore
the $n$-tuple of elements

\[
\left(u(t_{k-1})^\ast (z(t_k) - z(t_{k-1})) u(t_{k-1})\right)_{k=1,...,n}
\]
is freely independent from $\mcb$ and has the same joint distribution as

\[
\left(  (z(t_k) - z(t_{k-1})) \right)_{k=1,...,n},
\]

namely they are free semicircular with variances
\[
\left(A_z(t_k) - A_z(t_{k-1})\right)_{k=1,...,n}.
\]
Since $\int_{T_0}^t (u(t)^\ast \otimes u(t)) \sharp \,dz(t)$ is the limit, in the operator norm, of the  elements
\[
\sum_{i=1}^n u(t_{i-1})^\ast (z(t_i) - z(t_{i-1})) u(t_{i-1}),
\]

all the conclusions follow.
\end{proof}

In fact, the arguments show the following stronger result:

\begin{prop}
Let $\set{\mco_t: t \in [T_0, T_1]}$ be a filtration. Suppose $z: [T_0,T_1] \to \mco$ and $u: [T_0,T_1] \to \mco$ are processes adapted to this filtration, that is, $z(t), u(t) \in \mco_t$ for all $t$. Suppose in addition that each $u(t)$ is unitary, and $z$ is a free Brownian motion adapted to the filtration in the stronger sense that for $t > s$, $z(t) - z(s)$ is freely independent from $\mco_s$. Then the equation
\begin{equation}
dz^u(t) = u(t)^\ast dz(t) u(t)
\end{equation}
has a unique solution satisfying $z^u(T_0) = I$, and $z^u$ is a free Brownian motion with the same clock as $z$.
\end{prop}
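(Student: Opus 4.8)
The plan is to adapt the proof of Proposition~\ref{P:invar} essentially verbatim, replacing every use of the global free-independence hypothesis with the adaptedness hypothesis. First I would recall that the stochastic integral solving $dz^u(t)=u(t)^\ast\,dz(t)\,u(t)$ is constructed as a limit in operator norm of Riemann-type sums $\sum_{i=1}^n u(t_{i-1})^\ast\bigl(z(t_i)-z(t_{i-1})\bigr)u(t_{i-1})$ over partitions $T_0=t_0<\cdots<t_n=t$; existence and uniqueness of this solution follow from \cite{KS92} (or from the biprocess integral of \cite{BiaSpeBrownian}, whose norm estimate $\norm{(u^\ast\otimes u)\chf{[T_0,t)}}_{\mcb_\infty}=(t-T_0)^{1/2}$ depends only on $u$ being unitary and adapted, both of which still hold). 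The only genuinely new point is to verify that the increments of $z^u$ retain the free Brownian motion property relative to $z$'s own natural structure.

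The key step is the following: for a partition $T_0=t_0<\cdots<t_n=t$, set $\mathcal D_i=C^\ast\bigl(\mco_{t_{i-1}},\, z(t_i)-z(t_{i-1})\bigr)$, and observe that $u(t_{i-1})\in\mco_{t_{i-1}}$ by adaptedness while $z(t_i)-z(t_{i-1})$ is freely independent from $\mco_{t_{i-1}}$ by the strong adaptedness hypothesis on $z$. Hence Lemma~\ref{L:invar}, applied with $\mcb=\mco_{t_{i-1}}$, $z=z(t_i)-z(t_{i-1})$, and $u=u(t_{i-1})$, gives that $u(t_{i-1})^\ast\bigl(z(t_i)-z(t_{i-1})\bigr)u(t_{i-1})$ is freely independent from $\mco_{t_{i-1}}$ and has the same (semicircular, variance $A_z(t_i)-A_z(t_{i-1})$) distribution as $z(t_i)-z(t_{i-1})$. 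Iterating along the partition — at stage $i$, the conjugated increment is free from $\mco_{t_{i-1}}$, which contains all earlier conjugated increments together with $\mco_{T_0}$ — shows that the $n$-tuple $\bigl(u(t_{k-1})^\ast(z(t_k)-z(t_{k-1}))u(t_{k-1})\bigr)_{k=1,\dots,n}$ is a free semicircular family with variances $\bigl(A_z(t_k)-A_z(t_{k-1})\bigr)_k$, jointly free from $\mco_{T_0}$, exactly as for the unconjugated increments. Passing to the operator-norm limit of the partial sums then yields that $z^u$ has freely independent increments, that $z^u(t)-z^u(s)$ is centered semicircular with variance $A_z(t)-A_z(s)$, and that $z^u(T_0)=I$ (or $=0$, matching the normalization of a free Brownian motion); so $z^u$ is a free Brownian motion with clock $A_z$.

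The main obstacle I anticipate is the bookkeeping in the iteration: unlike in Proposition~\ref{P:invar}, where a single fixed algebra $\mcb$ is free from everything, here the relevant ``past'' algebra $\mco_{t_{i-1}}$ grows with $i$, so I must be careful that the increment $z(t_i)-z(t_{i-1})$ is free not merely from $\mco_{t_{i-1}}$ but from the algebra generated by $\mco_{t_{i-1}}$ together with the previously conjugated increments $u(t_{j-1})^\ast(z(t_j)-z(t_{j-1}))u(t_{j-1})$ for $j<i$. This is automatic because those conjugated increments lie in $\mco_{t_{i-1}}$: indeed $u(t_{j-1})\in\mco_{t_{j-1}}\subset\mco_{t_{i-1}}$ and $z(t_j)-z(t_{j-1})\in\mco_{t_j}\subset\mco_{t_{i-1}}$ for $j\le i-1$, using that the filtration is increasing. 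So the freeness at each stage is exactly freeness of $z(t_i)-z(t_{i-1})$ from $\mco_{t_{i-1}}$, which is the hypothesis. Once this is spelled out the rest is the same limiting argument as in Proposition~\ref{P:invar}, and I would simply refer the reader there rather than rewriting it.
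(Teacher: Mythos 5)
Your proposal is correct and is exactly what the paper intends: the paper offers no separate proof of this proposition, merely stating that ``the arguments show the following stronger result,'' i.e.\ the proof of Proposition~\ref{P:invar} carries over with $\mcb$ replaced by the filtration algebras $\mco_{t_{i-1}}$. Your observation that the previously conjugated increments lie in $\mco_{t_{i-1}}$ by adaptedness, so that Lemma~\ref{L:invar} applies at each stage, is precisely the point that makes the adaptation work.
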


Now we turn to the main result needed:

\begin{prop}
\label{P:cocycle}
Let $x$ and $z$ be freely independent free Brownian motions in $(\mco, \tau)$ with time parameter running over $[T_0,T_1]$. Then:
\begin{equation}
\label{E:uxz}
u_x^\ast u_{x+z} = u_{y}
\end{equation}
where         $y=z^{u_x}$.

Moreover, if $\mcb$ is a $\ast$-closed subalgebra of $\mco$ such that $x([T_0,T_1])\subset\mcb$, and $\mcb$ is freely independent
of  $z([T_0,T_1])$, then  $u_y([T_0,T_1])$ is freely independent of $\mcb$.
\end{prop}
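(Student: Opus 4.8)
The plan is to show that the product $w:=u_x\,u_y$, with $y=z^{u_x}$, solves the free stochastic differential equation that characterizes $u_{x+z}$ and has the same initial value, so that $w=u_{x+z}$ by the uniqueness clause of Proposition~\ref{P:unitbr}; rearranging then yields (\ref{E:uxz}). As preparation I would record two facts from the earlier results. First, by Lemma~\ref{L:sumbrown} the sum $x+z$ is a free Brownian motion with clock $A_{x+z}=A_x+A_z$, so $u_{x+z}$ is the unique solution of
\[
du_{x+z}=i\,d(x+z)\,u_{x+z}-\tfrac12\,u_{x+z}\,d(A_x+A_z),\qquad u_{x+z}(T_0)=I .
\]
Second, I apply Proposition~\ref{P:invar} with the algebra $\mcb_0=C^\ast\!\bigl(x([T_0,T_1])\bigr)$: it contains each $u_x(t)$ by Proposition~\ref{P:unitbr}, and it is freely independent of $z([T_0,T_1])$ since $x$ and $z$ are freely independent; hence $y=z^{u_x}$ is a free Brownian motion with $A_y=A_z$, and $u_y$ is the unique solution of $du_y=i\,dy\,u_y-\tfrac12\,u_y\,dA_z$, $u_y(T_0)=I$.

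Next I would apply the free It\^o product rule, $dw=(du_x)u_y+u_x(du_y)+(du_x)(du_y)$. The key simplification is the defining relation $dy=u_x^\ast\,dz\,u_x$ together with unitarity of $u_x$, which gives $u_x\,dy=dz\,u_x$ and hence $u_x\,dy\,u_y=dz\,w$. Thus $(du_x)u_y=i\,dx\,w-\tfrac12\,w\,dA_x$ and $u_x(du_y)=i\,dz\,w-\tfrac12\,w\,dA_z$. In the quadratic correction $(du_x)(du_y)$ every term carrying a factor $dA_x$ or $dA_z$ is of higher order and drops, leaving $-\,dx\,u_x\,dy\,u_y=-\,dx\,dz\,w$, which vanishes by the free It\^o rule $dx\,c\,dz=0$ for adapted $c$ — valid precisely because $x$ and $z$ are freely independent free Brownian motions. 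Adding the three contributions,
\[
dw=i\,d(x+z)\,w-\tfrac12\,w\,d(A_x+A_z),\qquad w(T_0)=I ,
\]
so $w=u_{x+z}$ by uniqueness, which is (\ref{E:uxz}).

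For the independence assertion, let $\mcb$ be a $\ast$-closed subalgebra with $x([T_0,T_1])\subset\mcb$ that is freely independent of $z([T_0,T_1])$. Then each $u_x(t)$ lies in $\mcb$ (Proposition~\ref{P:unitbr}), so Proposition~\ref{P:invar} applies verbatim with this $\mcb$ and shows that $\{y(t):t\in[T_0,T_1]\}$ is freely independent of $\mcb$. Since $u_y(t)\in C^\ast\!\bigl(y([T_0,t])\bigr)\subseteq C^\ast\!\bigl(y([T_0,T_1])\bigr)$ by Proposition~\ref{P:unitbr}, the family $u_y([T_0,T_1])$ is freely independent of $\mcb$.

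The step I expect to be the main obstacle is making the free It\^o computation above fully rigorous inside the Biane--Speicher framework of stochastic integration against free Brownian motion — in particular justifying $dx\,c\,dz=0$ for adapted $c$ and the differentiation of the product $u_x u_y$. A careful version would rewrite the differentials as biprocess integrals (as in the proof of Proposition~\ref{P:invar}), approximate by partition sums, use freeness of the increments of $x$ from all of $C^\ast\!\bigl(z([T_0,T_1])\bigr)$ to show the cross terms tend to $0$ in operator norm, and then pass to the limit; I would merely indicate this, since the differential bookkeeping already isolates exactly which cancellations must be checked.
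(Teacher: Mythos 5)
Your proposal is correct and follows essentially the same route as the paper: the paper computes $d\bigl(u_x^\ast u_{x+z}\bigr)$ via the free It\^o product formula and identifies the result with the SDE for $u_{z^{u_x}}$, while you equivalently compute $d(u_x u_y)$ and identify it with the SDE for $u_{x+z}$, invoking uniqueness in either direction; the cancellations used ($dx\,dz=0$, $u_x\,dy=dz\,u_x$, $dA_{x+z}=dA_x+dA_z$) and the deduction of the freeness claim from Propositions~\ref{P:unitbr} and~\ref{P:invar} are the same.
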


By Proposition \ref{P:unitbr}, $u_y(t)$ is multiplicative semicircular with parameter  $A_z(t)$ (see, in this context, the remark involving (\ref{E:distribux}) with $A_z(t)$ in place of $A_x(t)$).

\begin{proof}
Denote $u(t) = u_x(t)^\ast u_{x+z}(t)$. Recall that
\[
d u_{x+z} = i \,(dx + dz)\, u_{x+z} - \frac{1}{2} u_{x+z} \,dA_{x+z}
\]
and
\[
d u_x^\ast = - i u_x^\ast \,dx  - \frac{1}{2} u_x^\ast \,dA_x.
\]
Then using the free It{\^o} product formula,
\[
\begin{split}
d u & = u_x^\ast (d u_{x+z}) + (d u_x^\ast) u_{x+z} + (d u_x^\ast) (d u_{x+z}) \\
& = i u_x^\ast \,(dx + dz)\, u_{x+z} - \frac{1}{2} u_x^\ast u_{x+z} \,dA_{x+z} - i u_x^\ast \,dx\, u_{x+z} \\
&\qquad - \frac{1}{2} u_x^\ast u_{x+z} \,dA_x
  + u_x^\ast \,dx (dx + dz)\, u_{x+z} \\
& = i u_x^\ast \,dz\, u_{x+z} - \frac{1}{2} u_x^\ast u_{x+z} (dA_{x+z} + dA_x) + u_x^\ast u_{x+z} \,dA_x \\
& = i u_x^\ast \,dz\, u_{x+z} - \frac{1}{2} u_x^\ast u_{x+z} \,dA_{z} \\
& = i u_{x}^\ast \,dz\, u_x u - \frac{1}{2} u \,dA_{z} \\
& = i \,dz^{u_x} u - \frac{1}{2} u \,dA_{z^{u_x}}.\qquad\hbox{by (\ref{E:vuz}).}
\end{split}
\]
Therefore
\[
u_x(t)^\ast u_{x+z}(t) = u_{z^{u_x}}(t).
\]
By Proposition~\ref{P:unitbr}, if $x([T_0,T_1])\subset\mcb$ then also $u_x([T_0, T_1]) \subset \mcb$. Therefore by Proposition~\ref{P:invar}, $ {y}([T_0, T_1])$ is freely independent from $\mcb$, where $y=z^{u_x}$. Again, by Proposition ~\ref{P:unitbr}, $ u_{y}([T_0, T_1])$ is contained inside   the closed algebra generated by $ {y}([T_0, T_1])$, and so is freely independent of $\mcb$.
\end{proof}

\section{Lasso Holonomies}\label{s:lh}

In this section we  focus on a special class of loops, which we call standard lassos, that we will use later to study holonomies for more
general loops.

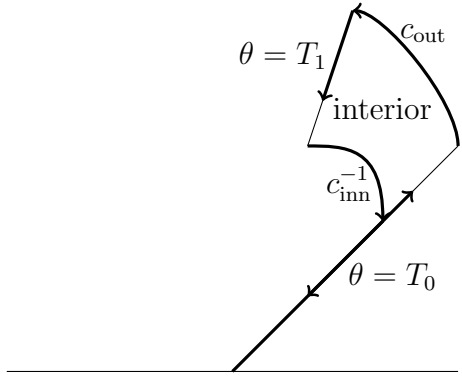
\begin{figure}
\begin{center}
 \begin{tikzpicture}[scale=2]
\draw (-1.5,0) -- (1.5,0);

\draw [->,very thick](0,0) -- (1.2,1.2);

\draw  (0,0) -- (1.5,1.5);
\draw [<-,very thick] (1,1) .. controls (1,1.5) and (0.75,1.5) .. (0.5,1.5);

\draw [<-, very thick](0.6,1.8) -- (0.8, 2.4);
\draw  (0.5,1.5) -- (0.8, 2.4);
\draw  [->,very thick] (1.5,1.5) .. controls (1.5,1.8) and (1,2.4) .. (0.8,2.4);

\draw [->,very thick](1,1) -- (0.5,0.5);

\coordinate [label=left:$ {c}_{\rm inn}^{-1}$] (cinn) at (1,1.25);

\coordinate [label=left:$ {c}_{\rm out}$] (cout) at (1.5,2.25);

\coordinate [label=left: interior] (cout) at (1.4,1.75);

\coordinate [label=right: ${\theta=T_0}$] (t0) at (.7,.63);

\coordinate [label=left: ${\theta=T_1}$] (t1) at (.7,2.1);

\end{tikzpicture}
\end{center}
\caption{A standard lasso}\label{fig:sl}
\end{figure}

By a {\em  standard lasso}  (see Figure \ref{fig:sl}) we mean a loop in $\mbr^2$ which starts
at the origin $o$, travels out along a radial path  $[0,r_{1}(T_0)]\to\mbc: r\mapsto re^{i T_0}$ for some fixed $T_0\in [0,2\pi)$ and $r_1(T_0)\geq 0$, followed by a cross radial arc $[T_0, T_1]\to\mbr^2:t\mapsto c_{\rm out}(t)=r_1(t)e^{it}$ for some $T_1\in (T_0,2\pi]$,
followed by the reverse of a radial path $[r_{0}(T_1), r_{1}(T_1)]\to\mbr^2:r\mapsto re^{i T_1}$ for some $r_0(T_1)\in [0,r_1(T_1)]$,
followed by the reverse of a cross radial arc $[T_0, T_1]\to\mbr^2:t\mapsto c_{\rm inn}(t)=r_0(t)e^{i  t}$ where $0\leq  r_0\leq r_1$, followed by the reverse
of the radial path $[0,r_0(T_0)]\to\mbc:r\mapsto re^{i T_0}$.  We assume also that $r_0(t)<r_1(t)$ for $t\in (T_0,T_1)$.
The {\em interior} of this standard lasso is the bounded open subset of the plane whose boundary is formed by (part of) $c$.
 The {\em cone} of  $c$ is the set of all points $re^{i t}$ with
$\theta\in [T_0, T_1]$ and $0\leq r\leq c_{\rm out}(t)$. The path $c_{\rm out}$ is the {\em outer arc} of the lasso and $c_{\rm inn}$ is the {\em inner arc} of the lasso.

For the following result recall from (\ref{E:defMc}) and (\ref{UtMt}) the variables $M_c(t)$ and $u_c(t)$ associated to a cross radial path $c$.

\begin{prop}\label{P:u12props}
 Let $c_j:[T_0,T_1]\to \mbr^2:t\mapsto r_{j}(t) e^{it}$, for $j\in\{1,2\}$, be continuous paths, with $0\leq r_1\leq r_2$ and $0\leq T_0<T_1\leq 2\pi$. For $T_0      \leq T\leq  t\leq T_1  $,  let
\begin{equation}\label{E:defuj}
u_j(t) = u_{c_j}(t) u_{c_j}(T)^\ast
\end{equation}
and
\begin{equation}\label{E:defu12}
u_{12}(t) = u_1(t)^\ast u_2(t).
\end{equation}
Then
\begin{enumerate}
\item
$u_j(t)$ does not depend on the values of $c_j(s)$ for $s < T$.

\item  $u_{12}(t)$ is in the algebra $\mcb_{S}$,  where $S$ is the cone $\{be^{ia}: a\in [T,t], 0\leq b\leq r_2(a)\}$;

\item
$u_{12}(t)$ is freely independent from $\mcb_{S_{12}(t)^c}$, where $S_{12}(t)$ is the region
\begin{equation}\label{E:defS12}
S_{12}(t) = \set{be^{ia} : a \in [T, t], r_1(a) \leq b \leq r_2(a)},
\end{equation}
enclosed by the lasso specified by the inner arc $c_1|[T,t]$ and the outer arc $c_2|[T,t]$.
\item
The distribution of $u_{12}(t)$ is multiplicative semicircular with parameter given by the area of $S_{12}(t)$.
\end{enumerate}
\end{prop}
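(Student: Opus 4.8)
\medskip

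\noindent\textbf{Proof idea.} The plan is to realize each $u_j(t)$ as the time-$t$ multiplicative increment of a free multiplicative Brownian motion driven by the ``cone noise'' of $c_j$ on $[T,t]$, and then to split the driving noise of $c_2$ as a freely independent sum of the driving noise of $c_1$ and an ``annular'' noise supported exactly on $S_{12}(t)$; at that point Proposition~\ref{P:cocycle} applies. Concretely, for $s\in[T,T_1]$ put $x_j(s)=M_{c_j}(s)-M_{c_j}(T)=b_{g^j_s}$, where $g^j_s$ is the indicator of the cone $\{re^{i\theta}:\theta\in[T,s],\,0\le r\le r_j(\theta)\}$. Since $r_1\le r_2$ we have $g^1_s\le g^2_s$, so $h_s:=g^2_s-g^1_s$ is, up to a null set, the indicator of $\{re^{i\theta}:\theta\in[T,s],\,r_1(\theta)\le r\le r_2(\theta)\}$; set $z(s)=b_{h_s}$. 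Each of $x_1,x_2,z$ is a free Brownian motion (since $b$ of an indicator is semicircular with variance its squared $L^2$-norm, and increments over disjoint time intervals have disjoint spatial supports, hence are free, as in Section~\ref{s:fym}), clocked by the area of the relevant region; by additivity of $f\mapsto b_f$ we have $x_2=x_1+z$, and $x_1$ is freely independent of $z$ because the total support of $x_1$ (the radius-$r_1$ cone) is disjoint, up to a null set, from that of $z$ (the annular region). (If $r_1=r_2$ on a set of positive measure, one first reparametrizes so that the clock of $z$ is strictly increasing, or disposes of that trivial case by hand.)

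Next I identify the $u_j$: since $(dM_{c_j})^2=dA_{M_{c_j}}$, and $x_j=M_{c_j}-M_{c_j}(T)$ has the same differential and, up to an additive constant, the same clock as $M_{c_j}$ on $[T,t]$, the process $t\mapsto u_j(t)=u_{c_j}(t)u_{c_j}(T)^\ast$ solves $du_j=i\,dx_j\,u_j-\frac{1}{2}u_j\,dA_{x_j}$ with $u_j(T)=I$; by uniqueness in Proposition~\ref{P:unitbr}, $u_j(t)=u_{x_j}(t)$. Part~(1) follows at once, since $x_j(s)$ for $s\le t$ depends only on $r_j|_{[T,t]}$. For part~(2), Proposition~\ref{P:unitbr} also gives $u_{x_j}(t)\in C^\ast\!\bigl(x_j([T,t])\bigr)\subset\mcb_{S^{(j)}}$, with $S^{(j)}$ the radius-$r_j$ cone over $[T,t]$; since $S^{(1)}\subset S^{(2)}=S$, we get $u_{12}(t)=u_1(t)^\ast u_2(t)\in\mcb_S$.

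For part~(3), apply Proposition~\ref{P:cocycle} with $x=x_1$ and this $z$: because $u_{x_1}=u_1$ and $u_{x_1+z}=u_{x_2}=u_2$, identity~(\ref{E:uxz}) gives $u_{12}(t)=u_{x_1}(t)^\ast u_{x_1+z}(t)=u_y(t)$ with $y=z^{u_{x_1}}$. Take $\mcb=\mcb_{S_{12}(t)^c}$. The radius-$r_1$ cone over $[T,t]$ meets $S_{12}(t)$ only in the null curve $r=r_1(\theta)$, so $x_1([T,t])\subset\mcb$; and $z([T,t])\subset\mcb_{S_{12}(t)}$ is freely independent of $\mcb$ by disjointness of supports. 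Hence the ``moreover'' clause of Proposition~\ref{P:cocycle} shows that $u_y([T,t])$, in particular $u_{12}(t)$, is freely independent of $\mcb_{S_{12}(t)^c}$. Part~(4) is the remark following Proposition~\ref{P:cocycle}: $y=z^{u_{x_1}}$ has the same clock as $z$, so $u_{12}(t)=u_y(t)$ is multiplicative semicircular with parameter $A_z(t)=\|h_t\|_{L^2}^2$, which is the area of $S_{12}(t)$.

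I expect the main obstacle to be part~(3), and within it the bookkeeping needed to make Proposition~\ref{P:cocycle} applicable: checking that $x_1+z=x_2$ exactly (additivity of $b$, null overlap of the cone and the annulus) and that both hypotheses of its ``moreover'' clause hold, namely $x_1([T,t])\subset\mcb_{S_{12}(t)^c}$ and $\mcb_{S_{12}(t)^c}$ freely independent of $z([T,t])$ — both of which come down to the radius-$r_1$ cone, the annular region $S_{12}(t)$, and its complement being pairwise disjoint up to null sets. Parts~(1), (2) and (4) are then formal consequences of Propositions~\ref{P:unitbr}, \ref{P:invar}, and \ref{P:cocycle}.
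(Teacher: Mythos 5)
Your proposal is correct and follows essentially the same route as the paper: identify the multiplicative increments $u_{c_j}(t)u_{c_j}(T)^\ast$ as the free multiplicative Brownian motion driven by $M_{c_j}(t)-M_{c_j}(T)$ (the paper cites Biane's Theorem 2 where you invoke uniqueness from Proposition \ref{P:unitbr}, which amounts to the same thing), decompose $M_{c_2}=M_{c_1}+(M_{c_2}-M_{c_1})$ with the summands freely independent by disjointness of supports, and apply Proposition \ref{P:cocycle} with $\mcb=\mcb_{S_{12}(t)^c}$ to get parts (2)--(4). Your write-up is in fact a bit more explicit than the paper's (null-set bookkeeping, clock identification), but it is the same argument.
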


\begin{proof}
Let $u_x(t)$ be the free multiplicative Brownian motion generated by the free Brownian motion $x(t)$ on $[T_0,T_1]$; then according to Theorem~2 of \cite{BiaMultiplicativeBM}, $u_x(t) u_x(T)^\ast$ is precisely the multiplicative free Brownian motion generated by $x(t)-x(T)$ on $[T, T_1]$. Applying this with $x$ being $M_{c_j}$ we obtain part (1). This also implies that $u_j(t) \in \mcb_{S_j(t)}$, where $S_j(t)$ is the cone of the cross radial path $c_j|[T,t]$. Therefore $u_{12}(t) \in \mcb_{S_2(t)}$. Moreover, by Proposition~\ref{P:cocycle},
\begin{equation}\label{E:u12m21}
u_{12}(t) = u_{w}(t)
\end{equation}
where $w$ is the free Brownian motion $M_{c_2} - M_{c_1}$ twisted by $u_{c_1}$:
$$w=(M_{c_2} - M_{c_1})^{u_{{c_1}}},$$
and is freely independent from $\mcb_{S_{12}(t)}$. Parts (2) and (3) follow. Part (4)   follows from the remark made after  the statement of Proposition ~\ref{P:cocycle}.
\end{proof}

  It will be useful to introduce a special type of graph in the plane, specified by a set of radial and cross radial edges:

  \begin{definition}\label{D:grid} A  {\rm grid} $\Gamma$ is a graph whose vertices are points in $\mbr^2$ and whose oriented edges are radial and cross radial paths, along with their reverses, specified in more detail as follows. Let $0=\theta_0<\theta_1<\ldots<\theta_N = 2\pi$, and consider cross radial paths
$$[\theta_{j-1},\theta_j]\to\mbr^2: \theta\mapsto r_{jk}(\theta)e^{i\theta},$$
for $j\in \{1,\ldots, N \}$ and $k\in\{1,\ldots, m_j\}$, where
$$ r_{j\,0}(\theta)\stackrel{\rm def}{=}0\leq r_{j1}(\theta)  \leq\ldots\leq r_{j \,m_j}(\theta),\quad\hbox{for all $\theta\in [\theta_{j-1},\theta_j]$},$$
such that a pair of these arcs intersect  at most at  one endpoint. The edges of $\Gamma$ are the paths $r_{jk}$, their reverses $r_{jk}^{-1}$, and all the radial paths connecting endpoints of   cross radial edges. The vertices of $\Gamma$ are the endpoints of all the edges of $\Gamma$. The grid contains, in particular, contiguous radial segments along the ray $\theta=0$.
  \end{definition}

  By a cross radial edge of $\Gamma$ we mean one of the cross radial paths $r_{jk}$;
  a reverse cross radial edge is, naturally, the reverse of a cross radial edge.

The radial edges of  the grid $\Gamma$ described above  are the radial paths
$$[a(\theta_{j}), b(\theta_j)]: s\mapsto  se^{i\theta_j},$$
whose endpoints are endpoints of `successive' cross radial arcs proceeding outward along the ray $\theta=\theta_j$, fo $j\in \{1,\ldots, N\}$.

We order the cross radial arcs $r_{j\,k}$ in lexicographic order in $(j,k)$:
\begin{equation}\label{E:rij}
r_{1\,1}\leq  r_{1\,2}\leq\ldots \leq r_{1\,m_1}\leq r_{2\,1}\leq\ldots\leq r_{N \,m_{N }}.\end{equation}

 A {\em minimal} lasso in the grid $\Gamma$ is a path in $\Gamma$ consisting of a radial path followed by a cross radial arc $r_{j\,k}$,  with $k\in \{1,\ldots, m_j\}$, followed by a  reverse radial path, followed by $ {r }_{j\, k-1}^{-1}$  followed by a radial path back to the origin $o$.  Denote this minimal lasso by $l_{jk}$.

  Let $\underline{e}$ denote the initial point of an edge $e$, and $\overline{e}$ the final point.  If $v$ is a vertex of a grid $\Gamma$ let $[o,v]$ denote the radial path in $\Gamma$ from the origin $o$ to $v$, and $[v,o]=[0,v]^{-1}$.

If $e$ is a cross radial edge  in a grid $\Gamma$ then the loop $l_e=  [\overline{e}, o]\cdot e\cdot [o,\underline{e}]$ is clearly backtrack equivalent to the composition a sequence of minimal lassos  in $\Gamma$.  Specifically, if $e=r_{jk}$ then
$$\hbox{ $l_e$ is backtrack equivalent to $l_{ {j\,1}}\ldots l_{ {j\,k}}$.}$$
(See Figure  \ref{fig:loopdec} for an illustration.) Consequently, if $e$ is reverse cross radial then $l_e$ is the composite of a sequence of reversed minimal lassos.


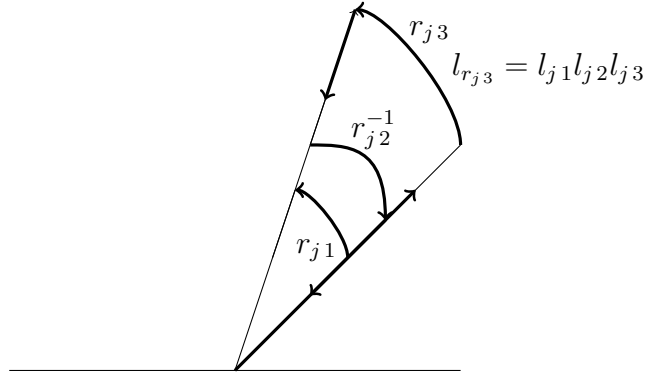
\begin{figure}\begin{center}
 \begin{tikzpicture}[scale=2]
\draw (-1.5,0) -- (1.5,0);

\draw [->,very thick](0,0) -- (1.2,1.2);

\draw  (0,0) -- (1.5,1.5);
\draw [<-,very thick] (1,1) .. controls (1,1.5) and (0.75,1.5) .. (0.5,1.5);

\draw [<-, very thick](0.6,1.8) -- (0.8, 2.4);

\draw [<-](0.8, 2.4) -- (0, 0);

\draw  (0.5,1.5) -- (0.8, 2.4);
\draw  [->,very thick] (1.5,1.5) .. controls (1.5,1.8) and (1,2.4) .. (0.8,2.4);

\draw  (0.25,.75) -- (0.4, 1.2);
\draw  [->,very thick] (.75,.75) .. controls (.75,.9) and (.5,1.2) .. (0.4,1.2);

\draw [->,very thick](1,1) -- (0.5,0.5);

\coordinate [label=right:$ r_{j\,2}^{-1}$] (cinn) at (.7,1.6);

\coordinate [label=left:$ r_{j\,3}$] (cout) at (1.5,2.25);

\coordinate [label=left:$ r_{j\,1}$] (clow) at (.75,.8);

\coordinate [label=left:{$l_{ r_{j\,3} }=l_{ j\,1 } l_{ j\,2 } l_{ j\,3 }$}] (decomp) at (2.8,2);

\end{tikzpicture}
\end{center}
\caption{Decomposing in terms of minimal lassos}

\label{fig:loopdec}
\end{figure}


\begin{prop}\label{P:loopsgrid}  Let $\Gamma$ be any grid.  Then:
\begin{itemize}
\item[(i)]  A loop  in $\Gamma$, based at a point $p$,  that consists only of radial or reverse radial edges is backtrack equivalent to the constant loop at $p$.

\item[(ii)] A path $c$ in $\Gamma$ that consists only of radial or reverse radial edges is backtrack equivalent to the radial/reverse-radial path, with no back tracking, from the initial point of $c$ to its final point.
\item[(iii)] The minimal lassos in $\Gamma$ can be listed in a sequence $l_1,\ldots, l_m$ such that the interior of $l_k$ is disjoint from the cone of $l_j$ whenever $j<k$. Any loop in $\Gamma$ based at $o$ is backtrack equivalent to the composite of a sequence of minimal lassos, or their reverses, in the grid. \end{itemize}
\end{prop}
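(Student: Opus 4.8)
The plan is to handle (i) and (ii) together by a tree argument, and (iii) by first exhibiting an explicit ordering and then reducing an arbitrary loop to a product of the loops $l_e$ that have already been treated in the text. First I would record the structure of the radial edges of $\Gamma$. Along each of the finitely many rays $\theta=\theta_j$ the radial edges form a single simple path $P_j$ emanating from $o$ (their endpoints are the radii, listed in increasing order, at which cross radial edges meet that ray), and distinct rays meet only at $o$; hence the subgraph $\Gamma_{\mathrm{rad}}=\bigcup_j P_j$ of all radial and reverse-radial edges is a tree. A path, respectively loop, as in (ii), respectively (i), is then an edge-path in $\Gamma_{\mathrm{rad}}$. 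Since erasing a backtrack $ee^{-1}$ strictly decreases edge-length, every such edge-path is backtrack equivalent to a reduced one; and in a tree the reduced edge-path between two vertices is unique, namely the geodesic, which is the constant path when the two vertices coincide. This proves (i), and proves (ii) because the geodesic in $\Gamma_{\mathrm{rad}}$ between two vertices is exactly the radial/reverse-radial path without backtracking joining them (running in to $o$ and back out if the two vertices lie on different rays).

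For the ordering statement in (iii) I would take the lexicographic order $(\ref{E:rij})$ on the arcs $r_{jk}$ and the induced order $l_1,\dots,l_m$ on the minimal lassos $l_{jk}$, $k\in\{1,\dots,m_j\}$. Suppose $l_j=l_{p\,a}$ and $l_k=l_{q\,b}$ with $(p,a)<(q,b)$. The cone of $l_{p\,a}$ lies over the angular interval $[\theta_{p-1},\theta_p]$ and is bounded radially above by $r_{p\,a}$, while the open interior of $l_{q\,b}$ lies over the open interval $(\theta_{q-1},\theta_q)$ and is bounded radially strictly below by $r_{q\,b-1}$. If $p<q$ these angular intervals are disjoint; if $p=q$ then $a\le b-1$, so on the common angular range $r_{p\,a}(\cdot)\le r_{q\,b-1}(\cdot)$, keeping the cone of $l_j$ weakly below $r_{q\,b-1}$ and the interior of $l_k$ strictly above it. In either case the two sets are disjoint, which is the claim.

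For the decomposition statement in (iii) I would write a loop $c$ based at $o$ as an edge-path $c=e_n\cdots e_1$ with $\underline{e_1}=\overline{e_n}=o$, set $v_i=\overline{e_i}=\underline{e_{i+1}}$ (so $v_0=v_n=o$), and use the radial paths $[o,v_i]$ in $\Gamma$ (well defined by (ii)): each $[o,v_i]\cdot[v_i,o]$ is a backtrack, so inserting these between consecutive edges and regrouping yields, up to backtrack equivalence,
\[
c\ \simeq_{\mathrm{bt}}\ \bigl([v_n,o]\,e_n\,[o,v_{n-1}]\bigr)\cdots\bigl([v_1,o]\,e_1\,[o,v_0]\bigr)\ =\ L_n\cdots L_1,
\]
where $L_i=[\,\overline{e_i},o\,]\cdot e_i\cdot[\,o,\underline{e_i}\,]$ is a loop at $o$. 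If $e_i$ is radial or reverse radial, then $L_i$ consists only of radial and reverse-radial edges and so is backtrack-trivial by (i), hence may be deleted. If $e_i$ is a cross radial edge $r_{p\,k}$, then $L_i=l_{e_i}\simeq_{\mathrm{bt}} l_{p\,1}\cdots l_{p\,k}$, a product of minimal lassos, as recorded in the text before the proposition; if $e_i$ is reverse cross radial, then $L_i=l_{e_i}$ is the reverse of such a product, hence a product of reversed minimal lassos (reversal preserves backtrack equivalence since $(a\,dd^{-1}\,b)^{-1}=b^{-1}\,dd^{-1}\,a^{-1}$). Substituting these equivalences factor by factor, which is legitimate by $(\ref{E:abapbp})$, presents $c$ up to backtrack equivalence as a product of minimal lassos and their reverses.

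The case analysis above is routine; the point that must be gotten exactly right is the path/backtrack bookkeeping in the last paragraph — that the inserted round trips $[o,v_i]\,[v_i,o]$ really are backtracks $dd^{-1}$ in the sense of the definition, that the regrouping telescopes back to $c$, and that the substitution via $(\ref{E:abapbp})$ stays valid when some factor $L_i$ is backtrack-trivial. One should also state the tree claim underlying (i), (ii) and the deletion step of (iii) with care (distinct rays meet only at $o$, and on each ray the radial edges form a simple path out of $o$), since all three parts rest on it.
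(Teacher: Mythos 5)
Your proof is correct and takes essentially the same route as the paper's: (i)--(ii) reduce to the fact that reduced edge-paths in the radial tree are geodesics (the paper runs the equivalent induction on the vertex of the loop furthest from $p$), and (iii) decomposes the loop into the conjugated single-edge loops $l_{e_i}$ and thence into minimal lassos via the pre-stated decomposition of $l_{r_{jk}}$, just as the paper does by peeling off the first cross-radial edge inductively. The only substantive addition is your explicit check that the lexicographic order makes the interior of $l_k$ disjoint from the cone of $l_j$ for $j<k$, a point the paper asserts without verification.
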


Note that here we are treating the constant loop at $p$ as a loop in the graph $\Gamma$ and are viewing it as backtrack equivalent to $b^{-1}b$ for any edge $b$ initiating at $p$.

\begin{proof} (i) Let $c$ be a loop in $\Gamma$, based at $p$, and suppose $c=b_n\ldots b_1$, where $b_j$ is radial or reverse radial for all $j\in \{1,\ldots, n\}$.  Let $k\in\{1,\ldots, n\}$ be such that $\overline{b_k}$ is furthest from  $p$, with distance being measured by the minimal number of radial/reversed-radial edges needed to go from one point to the other. Then $k<n$ and the final point of $b_{k+1}$ must be the same as the initial point of $b_k$ (or else $\overline{b_{k+1}}$ would be further away from $p$ than $\overline{b_k}$). Since $b_k$ and $b_{k+1}$ are both radial or reverse radial it follows that $b_{k+1}=b_k^{-1}$.  By erasing $b_{k+1}b_k$ from $c$ we obtain a loop, still based at $p$, with fewer edges and that still satisfies the condition that every edge in it is either radial or cross radial. Inductively, $c$ is backtrack equivalent to the constant loop at $p$.

(ii) follows from (i) by considering the loop formed by the path $c$ followed by the `direct' path $d$ back from $\overline{c}$ to $\underline{c}$. Specifically, $d^{-1}c$ is backtrack equivalent to the constant path at $\underline{c}$, and so, composing on the left with $d$, it follows that $c$ is backtrack equivalent to $d$.

(iii)   Recall from (\ref{E:rij}) the sequence of cross radial edges  $r_{ij}$, and the corresponding minimal lassos $l_{ij}$ (as explained following (\ref{E:rij})) where the outer arc of $l_{ij}$ is given by $r_{ij}$. Now re-index the $(i,j)$ in lexicorgraphic order:
$$(1,1)< (1,2)<\ldots  < (1,m_1) < (2,1)<\ldots  <(N,m_N)$$
and let $l_k$ be $l_{ij}$ if $(i,j)$ is the $k$-th pair counted up, the lowest being $(1,1)$.

Let $l$ be a loop  in a grid $\Gamma$ based at the origin $o$ that is not entirely radial. Let $e$ be the first  edge of $l$ that is cross radial or reverse cross radial. Then $l$ is backtrack equivalent to the composite of  $l_e$ and a loop in $\Gamma$ that contains one fewer edge (than $l$) that is either cross   or reverse cross radial. Inductively, then $l$ is backtrack equivalent to the composite of a sequence of loops of the form $l_e$ and  a loop consisting solely of radial edges. The latter is backtrack erasable by (i), leaving $l$ equivalent to a composite of loops $l_e$, each of which is, as noted before,    backtrack equivalent to the composite of   minimal lassos
 and their reverses.  \end{proof}

For an elegant study of loops in graphs on surfaces, see L\'evy \cite{Le10}. We note there some interesting facts: each backtrack equivalence class contains a loop of minimal edge length, and these form a group under composition of loops followed by backtrack reduction to minimal edge length.

We have now the following immediate consequence of Proposition \ref{P:u12props}:

\begin{theorem}\label{T:fregrid} Let $\Gamma$ be a grid in $\mbr^2$, and let $l_1,\ldots, l_m$ be the basic minimal lassos of $\Gamma$  listed in sequence so that
the interior of $l_k$ is disjoint from the cone of $l_j$ for all $j<k$. Then $u_{l_1},\ldots, u_{l_m}$ are  freely independent, and each $u_{l_j}$ is multiplicative semicircular with parameter $|S_j|$,  where $S_j$ is the interior of the lasso $l_j$.
\end{theorem}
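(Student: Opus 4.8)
The plan is to reduce Theorem \ref{T:fregrid} to repeated application of Proposition \ref{P:u12props}, using the ordering of the minimal lassos. Recall from the discussion preceding the theorem (and from part (iii) of Proposition \ref{P:loopsgrid}) that the minimal lassos of $\Gamma$ can be listed as $l_1,\ldots,l_m$ so that the interior of $l_k$ is disjoint from the cone of $l_j$ for all $j<k$. For each $k$ the lasso $l_k$ is a standard lasso whose outer arc is some cross radial edge $r_{jk'}$ and whose inner arc is $r_{j\,k'-1}$; in the notation of Proposition \ref{P:u12props}, with $T=\theta_{j-1}$ and $t=\theta_j$ and $c_1 = c_{\mathrm{inn}}$, $c_2 = c_{\mathrm{out}}$, we have $u_{l_k} = u_{12}(\theta_j)$ up to the backtrack-invariance of holonomy along the radial segments (which contribute $u=1$ by Definition \ref{d:hol}). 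So by Proposition \ref{P:u12props}(4), $u_{l_k}$ is multiplicative semicircular with parameter equal to the area $|S_k|$ of its interior, which already proves the second assertion.

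For the free independence, the key observation is part (3) of Proposition \ref{P:u12props}: $u_{l_k}$ is freely independent from $\mcb_{S_k^c}$, where $S_k$ is the (open) interior of $l_k$. The plan is to induct on $k$, showing that $u_{l_1},\ldots,u_{l_k}$ are freely independent and that $u_{l_1},\ldots,u_{l_{k-1}}$ all lie in $\mcb_{U_{k-1}}$, where $U_{k-1} = \mathrm{cone}(l_1)\cup\cdots\cup\mathrm{cone}(l_{k-1})$ — more precisely in the algebra generated by the $b_f$ with $f$ supported in that union. The containment $u_{l_j}\in\mcb_{\mathrm{cone}(l_j)}$ is Proposition \ref{P:u12props}(2). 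The ordering hypothesis gives $S_k\cap\mathrm{cone}(l_j)=\emptyset$ for $j<k$, hence $U_{k-1}\subset S_k^c$, so $\mcb_{U_{k-1}}\subset\mcb_{S_k^c}$, and Proposition \ref{P:u12props}(3) then yields that $u_{l_k}$ is freely independent from $\mcb_{U_{k-1}}$, hence from the subalgebra generated by $u_{l_1},\ldots,u_{l_{k-1}}$. Combining this with the inductive hypothesis that $u_{l_1},\ldots,u_{l_{k-1}}$ are themselves freely independent, associativity of free independence gives that $u_{l_1},\ldots,u_{l_k}$ are freely independent. At the same time $u_{l_k}\in\mcb_{\mathrm{cone}(l_k)}\subset\mcb_{U_k}$, completing the induction step.

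The main obstacle is making precise the identification of $u_{l_k}$ with the variable $u_{12}(\theta_j)$ of Proposition \ref{P:u12props}, and the fact that the free independence of $u_{l_k}$ from $\mcb_{S_k^c}$ is exactly what is needed to feed an induction — one must check that the complement $S_k^c$ is the right region, i.e. that $S_{12}(t)$ in (\ref{E:defS12}) coincides with the interior $S_k$ of the lasso (this is the content of the phrase "enclosed by the lasso" in Proposition \ref{P:u12props}(3)), and that disjointness of interior-of-$l_k$ from cone-of-$l_j$ is the correct geometric input rather than, say, disjointness of the two cones. Once these geometric matchings are in hand, the free-probability content is entirely carried by Proposition \ref{P:u12props} together with associativity of freeness, and no further stochastic analysis is required — the theorem is, as the authors say, an immediate consequence.
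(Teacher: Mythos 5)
Your argument is correct and is exactly the route the paper intends: the authors state Theorem \ref{T:fregrid} as an ``immediate consequence of Proposition \ref{P:u12props}'' without writing out the details, and your identification of $u_{l_k}$ with $u_{12}(\theta_j)$, the use of parts (2)--(4), and the induction via $\mcb_{U_{k-1}}\subset\mcb_{S_k^c}$ together with associativity of freeness supply precisely the missing details.
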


 \section{Review of $U(N)$ planar Yang-Mills}\label{s:revN}

 Everything we have done so far works  even if the variables $b_f$ and $u_c$
 are random variables on some probability space, with $b_f$ taking values in the space of all
 skew-hermitian $N\times N$ matrices, and $u_c$ taking values in $U(N)$. The equation (\ref{E:ux}) for parallel transport is then taken as an
 It\^o stochastic differential equation.  Free independence is replaced by classical probabilistic independence.
This is the  quantum Yang-Mills theory on the plane with
gauge group $U(N)$ as developed by Gross et al. \cite{GKS89} and Driver \cite{Dr89}. The formal expression for the quantum $U(N)$ Yang-Mills
measure  is
\begin{equation}\label{e:formalym2un}
\frac{1}{Z_g}e^{-\frac{1}{2g^2} S_{\rm YM}(A)}\,[DA],\end{equation}
where $S_{\rm YM}(A) =\frac{1}{2}|\!|F^A|\!|_{L^2}^2$, with $F^A$ being curvature of a connection $A$, is the Yang-Mills action, and $g>0$ a (coupling) constant.  Our objective is the limit of this measure as $N\to\infty$ but holding $g^2N$ fixed, and so we set $g^2=1/N$. We now review very briefly the basic ideas and results for $U(N)$ planar quantum Yang-Mills theory (for more see    \cite{Dr89} and \cite{GKS89}).

The Lie algebra $u(N)$ consists of all skew-hermitian $N\times N$ matrices, and
\begin{equation}\label{E:ipuN}
\la H, K\ra=-\Tr(HK)\end{equation}
specifies an inner product on $u(N)$ that is invariant under the conjugation action of $U(N)$ on $u(N)$. Let $E_1,\ldots, E_{N^2}$ be an orthonormal basis of $u(N)$. The Gaussian measure (\ref{e:formalym2un}), with $g^2=1/N$, is modeled rigorously as follows. There is a probability space $(\Omega_N, {\mathcal F}_N,\mbp_N)$   and a linear map
$$L^2(\mbr^2)\otimes u(N)\to L^2(\mbp_N):\phi\mapsto \phi^\sim$$
such that $\phi^\sim$ is Gaussian of mean $0$ and variance $\frac{1}{N}|\!|\phi|\!|^2$.  For any
$f\in L^2(\mbr^2)$ and every basis element $E_j$ there is thus a Gaussian random variable $(fE_j)^\sim$, depending linearly on $f$, of mean $0$ and variance $\frac{1}{N}|\!|f|\!|_{L^2(\mbr^2)} $; thus the random variable
$$ \sum_{j=1}^{N^2}(fE_j)^\sim E_j,$$
 with values in the matrix Lie algebra $u(N)$,   depends linearly on $f$ and  the components $(fE_j)^\sim $
are independent Gaussians with mean $0$ and variance $\frac{1}{N}|\!|f|\!|_{L^2(\mbr^2)}$.  Thus
\begin{equation}\label{E:defbNf}
b_{N,f}=  \sum_{j=1}^{N^2}(fE_j)^\sim iE_j\end{equation}
is a hermitian random matrix  such that $\Tr(b_{N,f}iE_j)$ is Gaussian with mean $0$ and variance  $\frac{1}{N}|\!|f|\!|_{L^2(\mbr^2)}$.

Now consider a cross radial path $c:[T_0,T_1]\to\mbr^2:\theta\mapsto r_c(\theta)e^{i\theta}$. Let
\begin{equation}\label{E:MNct}
M_{N,c}(t)= b_{N,S_c(t)},\end{equation}
where $S_c(t)$ is the usual cone for $c|[T_0,t]$ seen in (\ref{E:defSct2}). Then
$$t\mapsto  iM_{N,c}(t)$$
 is a $u(N)$-valued Brownian motion clocked by the scaled area function $t\mapsto \frac{1}{N}|S_c(t)|$, where $|A|$ denotes Lebesue measure of $A\subset\mbr^2$.  Stochastic parallel transport
 $$t\mapsto h_c(t),$$
 with $t\in [T_0,T_1]$ and each $h_c(t)$ a $U(N)$-valued random variable, solves
 the It\^o stochastic differential equation
 \begin{equation}\label{htMtN}dh_c = i \bigl(dM_{N,c} \bigr) h_c    -\frac{1}{2}\bigl(dM_{N,c} \bigr)^2h_c,\qquad\hbox{with}\quad h_c(T_0)=I.\end{equation}
 As with $u_c$ in (\ref{E:defucT1}), we allow ourselves the notational ambiguity of denoting parallel transport along the full path $c$ also by $h_c$:
 \begin{equation}\label{E:defhcT1}
 h_c=h_c(T_1).\end{equation}
The variable $h_c$ has values in $U(N)$ and has density (with respect to unit mass Haar measure) given by
\begin{equation}\label{E:denhct}
Q_{\frac{1}{N} |S_c|}(x),\end{equation}
where $Q_t(x)$ is the heat kernel on $U(N)$ and $ |S_c|$ is the area of the cone $S_c=\{re^{it}:t\in [T_0, T_1], 0\leq r\leq r_c(t)\}$.  (To avoid notational clutter, we are not indexing $h_c$ by $N$. ) The heat kernel $Q_t(x)$ is the solution of the heat equation
\begin{equation}\label{E:heat}
\frac{\partial Q_t(x)}{\partial t}=\frac{1}{2}\Delta_{U(N)}Q_t(x),\end{equation}
where $\Delta_{U(N)}$ is the Laplacian on $U(N)$, and satisfies the initial condition
$$\lim_{t\downarrow 0}\int_{U(N)}f(x)Q_t(x)\,dx=f(I),$$
for continuous functions $f$ on $G$, where $dx$ is unit mass Haar measure on $U(N)$.
Probabilistically, $Q_t(x)$ is the density of the time-$t$ position of standard Brownian motion on $U(N)$.

For a basic curve $c$,    define $h_c$ exactly analogously to $u_c$ as in Definition \ref{d:hol}:

\begin{definition}\label{d:stochhol}   For a basic curve $c:[T_0 T_1]\to\mbr^2$ we define
the {\em stochastic parallel transport} along $c$ to be
\begin{equation}\label{E:defhol2}
h_c=h_{c_m}\cdots h_{c_1}\end{equation}
if $c$ is the composite
$$c=c_m\cdots c_1,$$
where each $c_j$ or its reverse is radial or cross radial, and $h_{c_j}=1$ if $c_j$ is radial.  If $c$ is a loop, then we call $h_c$ the {\em stochastic holonomy} around $c$. \end{definition}

The following result is the exact analog of  our free Theorem \ref{T:fregrid}:

\begin{theorem}\label{T:unholgrid} Let $\Gamma$ be a grid in $\mbr^2$, and let $l_1,\ldots, l_m$ be the basic minimal lassos of $\Gamma$  listed in sequence so that
the interior of $l_k$ is disjoint from the cone of $l_j$ for all $j<k$. Then the variables $h_{l_1},\ldots, h_{l_m}$ are independent, and each $h_{l_j}$ has density on $U(N)$, relative to unit mass Haar measure, given by $Q_{\frac{1}{N}|S_j|}(x)$  where $S_j$ is the interior of the lasso $l_j$.
\end{theorem}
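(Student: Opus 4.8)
The plan is to run, in the classical probability setting of Section~\ref{s:revN}, the identical chain of arguments that produced the free Theorem~\ref{T:fregrid}, reading ``(stochastically) independent'' for ``freely independent'' throughout and, for a Borel set $S$, replacing the $C^\ast$-algebra $\mcb_S$ by the $\sigma$-algebra $\mcf_S$ generated by $\{b_{N,f}: f \hbox{ supported in } S\}$; for disjoint sets the $\mcf_S$ are independent since the underlying Gaussians $(1_S E_j)^\sim$ are. The target is the exact $U(N)$ analog of Proposition~\ref{P:u12props}: for nested cross radial paths $c_1,c_2$ (so $0 \le r_1 \le r_2$) and $T_0 \le T \le t \le T_1$, the ratio $h_{12}(t) = h_1(t)^\ast h_2(t)$, with $h_j(t) = h_{c_j}(t) h_{c_j}(T)^\ast$ as in (\ref{E:defuj}), is $\mcf_{S_2(t)}$-measurable, is independent of $\mcf_{S_{12}(t)^c}$, and has density $Q_{\frac1N|S_{12}(t)|}$ relative to unit-mass Haar measure, where $S_2(t)$ is the cone of $c_2|[T,t]$ and $S_{12}(t)$ is the enclosed region (\ref{E:defS12}); the single-arc case is already (\ref{E:denhct}).

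The one genuinely new ingredient is the classical counterpart of Lemma~\ref{L:invar} and Proposition~\ref{P:invar}. It rests on the conjugation invariance of the inner product (\ref{E:ipuN}): for $U \in U(N)$ the map $H \mapsto U^\ast H U$ is orthogonal on $u(N)$. Hence if $z$ is a $u(N)$-valued Brownian motion independent of an adapted unitary-valued process $u$, then $z^u(t) = \int_{T_0}^t u(s)^\ast\, dz(s)\, u(s)$ (cf.\ (\ref{E:vuz})) is again a $u(N)$-valued Brownian motion with the same clock and, moreover, is independent of $\sigma(u)$. I would prove this by the discretization the authors use for Proposition~\ref{P:invar}: along a partition $T_0 = t_0 < \dots < t_n = t$, each conjugated increment $u(t_{i-1})^\ast(z(t_i)-z(t_{i-1}))u(t_{i-1})$ has the same centered Gaussian law as $z(t_i)-z(t_{i-1})$ and, given the history up to $t_{i-1}$, remains independent of it; so the $n$-tuple of conjugated increments is jointly independent of $\sigma(u)$ with the original joint law, and one passes to the $L^2$ limit. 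Existence and uniqueness for the matrix It\^o SDE (\ref{htMtN}) and the identification of single-arc parallel transport with $U(N)$-Brownian motion are part of the Gross--Kuo--Sengupta \cite{GKS89} and Driver \cite{Dr89} theory.

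Everything else is transcription. The cocycle identity $h_{c_1}(t)^\ast h_{c_2}(t) = h_{z^{h_{c_1}}}(t)$ with $z = M_{N,c_2} - M_{N,c_1} = b_{N,\,S_{c_2}(t)\setminus S_{c_1}(t)}$ follows from the verbatim It\^o-product-formula computation of Proposition~\ref{P:cocycle}; the drift terms cancel because the cones are nested, so the cross-variation of $M_{N,c_1}$ and $M_{N,c_2}$ is the quadratic variation of the inner process $M_{N,c_1}$, leaving the residual $(dM_{N,c_2} - dM_{N,c_1})^2 = d|S_{12}(t)|\cdot I$ after contracting $\sum_j E_j^2 = -N\,I$. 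Since $M_{N,c_1}(\cdot)$ is $\mcf_{S_{c_1}(t)}$-measurable, $z$ is $\mcf_{S_{12}(t)}$-measurable, and $S_{c_1}(t)$ and $S_{12}(t)$ are disjoint up to a null set, the invariance result of the previous paragraph makes $h_{z^{h_{c_1}}}(\cdot)$ a $U(N)$-Brownian motion run for time $\frac1N|S_{12}(t)|$ and independent of $\mcf_{S_{12}(t)^c} \supseteq \mcf_{S_{c_1}(t)}$; with (\ref{E:denhct}) this is the $U(N)$ version of Proposition~\ref{P:u12props}.

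Finally the combinatorial reduction is unchanged. By Proposition~\ref{P:loopsgrid}(iii) the minimal lassos of $\Gamma$ can be ordered $l_1, \dots, l_m$ so that the interior of $l_k$ is disjoint from the cone of $l_j$ for $j < k$, and each $h_{l_j}$ is the ratio $h_{12}$ of parallel transports along the inner and outer arcs of $l_j$ (radial edges contributing the identity). Hence $h_{l_j} \in \mcf_{\mathrm{cone}(l_j)}$, has density $Q_{\frac1N|S_j|}$ with $S_j$ the interior of $l_j$, and is independent of $\mcf_{S_j^c}$. An induction on $m$ gives mutual independence: for $j<m$ one has $\mathrm{cone}(l_j) \subseteq S_m^c$, so $\sigma(h_{l_1},\dots,h_{l_{m-1}}) \subseteq \mcf_{S_m^c}$ and $h_{l_m}$ is independent of $h_{l_1},\dots,h_{l_{m-1}}$; the family $l_1,\dots,l_{m-1}$ satisfies the same hypothesis, closing the induction. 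I expect the only real work to be the classical invariance lemma --- establishing that $z^u$ is not just equidistributed with $z$ but genuinely independent of the conjugator --- which is precisely where the $U(N)$-invariance of (\ref{E:ipuN}) enters; the remainder is the free proof read with ``independent'' for ``free''.
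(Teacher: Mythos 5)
Your proposal is correct and is exactly the argument the paper intends: the paper offers no written proof of Theorem~\ref{T:unholgrid}, simply asserting at the start of Section~\ref{s:revN} that everything in the free development carries over verbatim to the classical $U(N)$ setting (deferring to \cite{GKS89} and \cite{Dr89}), and your transcription --- classical independence for freeness, $\sigma$-algebras $\mcf_S$ for the algebras $\mcb_S$, and the Ad-invariance of the inner product (\ref{E:ipuN}) as the engine behind the classical analog of Lemma~\ref{L:invar} and Proposition~\ref{P:invar} --- is precisely that carry-over, with the It\^o bookkeeping ($\sum_j (iE_j)^2 = NI$, hence $(dM_{N,c})^2 = d|S_c|\cdot I$) done correctly. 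You have in effect supplied the detail the paper omits.
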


A special feature of the finite-$N$ theory is that it can be reformulated as a lattice gauge theory.  Let $\mbe_{\Gamma}$ be the set of oriented edges of $\Gamma$, $\mbv_{\Gamma}$ the set of vertices, and $\mbf_{\Gamma}$ the set of faces (bounded components of the complement in $\mbr^2$ of the set of points on all the  edges of $\Gamma$). We equip each face $F$ with the standard orientation from $\mbr^2$ and also equip it with a fixed, arbitrarily chosen, basepoint on $\partial F$. The set
\begin{equation}\label{E:defAGamma}
{\mca}_{\Gamma}=\{x:\mbe_{\Gamma}\to U(N)\,:\, x(e^{-1})=x(e)^{-1}\quad\hbox{for all $e\in \mbe_{\Gamma}$ }\}\end{equation}
is a discrete version of the space of all connections restricted to $\Gamma$.  The group of gauge transformations is then
\begin{equation}\label{E:defGGam}
{\mathcal G}_{\Gamma}= U(N)^{\mbv_{\Gamma}},\end{equation}
with any $\theta\in {\mathcal G}_{\Gamma}$ acting on $\mca_{\Gamma}$ by
\begin{equation}\label{E:gaugetr}
x^{\theta}(e)=\theta(\overline{e})^{-1}x(e)\theta(\underline{e})\quad\hbox{for all edges $e$.}\end{equation}
The {\em discrete Yang-Mills measure} $\mu_{\Gamma}$ is the probability measure on $\mca_{\Gamma}$ given by
\begin{equation}\label{E:dym}
d\mu_{\Gamma}(x)=\prod_{F\in\mbf_{\Gamma}}Q_{\frac{1}{N}|F|}\bigl(x(\partial F)\bigr),\end{equation}
where $|F|$ is the area of the face $F$, and $x(\partial F)=x(b_k)\ldots x(b_1)$ if
$b_,\ldots, b_k$, composed in that order, form the boundary $\partial F$, starting with the given basepoint on $\partial F$. This measure is clearly invariant under the action of ${\mathcal G}_{\Gamma}$, and can be viewed also as a measure on the quotient space ${\mca}_{\Gamma}/{\mathcal G}_{\Gamma}$.

 With these structures in place, we have:

\begin{theorem}\label{T:unholcigrid} Let $\Gamma$ be a grid in $\mbr^2$, and let $c_1,\ldots, c_n$ be loops, based at $o$, in $\Gamma$. Then
\begin{equation}\label{E:ymexpec}
\int f(h_{c_1},\ldots, h_{c_n})\,d\mbp_N = \int_{\mca_{\Gamma}}f\left(x(c_1),\ldots, x(c_n)\right)\,d\mu_{\Gamma}(x)
\end{equation}
for all bounded measurable functions $f$ on $U(N)^n$.
\end{theorem}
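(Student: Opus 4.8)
The plan is to deduce Theorem~\ref{T:unholcigrid} from the structural results already in place, chiefly Theorem~\ref{T:unholgrid} and Proposition~\ref{P:loopsgrid}. First I would reduce the left-hand side of \eqref{E:ymexpec} to an expectation involving only the minimal-lasso holonomies. By Proposition~\ref{P:loopsgrid}(iii), each loop $c_i$ based at $o$ is backtrack equivalent to a composite of minimal lassos $l_1,\ldots,l_m$ and their reverses; since stochastic holonomy ignores backtracks (this follows for $h_c$ exactly as it does for $u_c$, because $h_{dd^{-1}}=h_d h_d^{-1}=I$ by Definition~\ref{d:stochhol} and \eqref{e:defucbar}'s analog), each $h_{c_i}$ is a fixed word in the variables $h_{l_j}$ and their inverses. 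Hence the left-hand side equals $\mbe_N\bigl[F(h_{l_1},\ldots,h_{l_m})\bigr]$ for a suitable bounded measurable $F$ on $U(N)^m$, and by Theorem~\ref{T:unholgrid} the $h_{l_j}$ are independent with $h_{l_j}$ distributed according to the heat-kernel density $Q_{\frac1N|S_j|}$ on $U(N)$.

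Next I would show that the right-hand side of \eqref{E:ymexpec} has the same form. The discrete Yang-Mills measure $\mu_\Gamma$ on $\mca_\Gamma$ is, after a standard change of variables, supported on a product of independent $U(N)$-valued variables, one per face, with the face variable distributed by $Q_{\frac1N|F|}$. Concretely, one fixes a maximal tree $T$ in the dual sense (or, following L\'evy, uses the lasso basis): the faces of $\Gamma$ are exactly the interiors $S_j$ of the minimal lassos $l_j$, and the map sending a connection $x\in\mca_\Gamma$ to the tuple $\bigl(x(\partial S_j)\bigr)_{j=1}^m$ pushes $\mu_\Gamma$ forward to the product of the $Q_{\frac1N|S_j|}$. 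Gauge-fixing along the radial tree of $\Gamma$ lets one write $x(c_i)$ as the very same word in the $x(\partial S_j)$ that expresses $h_{c_i}$ in terms of the $h_{l_j}$ — this is the crux, and it is precisely the combinatorial content of Proposition~\ref{P:loopsgrid}(iii) read on the lattice side rather than the path side. Therefore $\int_{\mca_\Gamma} f\bigl(x(c_1),\ldots,x(c_n)\bigr)\,d\mu_\Gamma(x) = \int f\bigl(W_1(z),\ldots,W_n(z)\bigr)\,\prod_j Q_{\frac1N|S_j|}(z_j)\,dz_j$, where $W_i$ is the same word $F$-building word as above, so the two sides of \eqref{E:ymexpec} agree.

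I expect the main obstacle to be making the ``same word'' claim rigorous and orientation-correct: one must check that the recursive decomposition of a loop into minimal lassos in Proposition~\ref{P:loopsgrid}(iii) is compatible, with matching orientations and ordering, with the way $x(\partial F)$ is defined in \eqref{E:dym} (which depends on the chosen basepoints on the faces). In particular the minimal lasso $l_{jk}$ has boundary traversed via the outer arc $r_{jk}$ and the reverse inner arc $r_{j,k-1}^{-1}$, and one needs the basepoint/orientation conventions for the face between $r_{j,k-1}$ and $r_{jk}$ to match this; adjusting by conjugation (which Haar-invariance of $Q_t$ absorbs) and by the tree-gauge-fixing handles the discrepancy. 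A clean way to organize this is to invoke the known equivalence (Driver~\cite{Dr89}, L\'evy~\cite{Le10}) between the continuum parallel transport $h_c$ and the lattice variable $x(c)$ under $\mu_\Gamma$ when $c$ is a loop in $\Gamma$; then \eqref{E:ymexpec} is essentially a restatement, and the only genuine work is to confirm that the lasso basis used here is an admissible generating family of faces in the sense of that theory, which follows from Definition~\ref{D:grid} and Proposition~\ref{P:loopsgrid}. Finally I would note that, since both sides are determined by finitely many moments and the relevant random variables are bounded (lying in compact $U(N)$), it suffices to verify the identity for $f$ a product of matrix-coefficient functions, reducing everything to a finite computation with the heat-kernel convolution semigroup.
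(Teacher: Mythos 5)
Your proposal is correct and follows essentially the same route the paper takes: the paper defers the details to Driver \cite{Dr89} and \cite{Se7}, noting only that Theorem~\ref{T:unholcigrid} follows from Theorem~\ref{T:unholgrid} by undoing the (radial) gauge fixing at the lattice level, which is precisely your reduction of both sides to the same word in the independent, heat-kernel-distributed minimal-lasso variables. Your identification of the orientation/basepoint matching as the main technical point to check is also consistent with what those references handle.
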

For a proof we refer to   \cite[Theorem 6.4]{Dr89} (with axial gauge fixing, rather than radial gauge fixing) and \cite[Theorem 8.4]{Se7} (a very general case for surfaces).
Technically, Theorem \ref{T:unholcigrid} is a consequence of Theorem \ref{T:unholgrid},  the essence of the idea being that the original gauge fixing can be undone at the lattice level to produce the gauge invariant expression (\ref{E:ymexpec}).

The convolution property
$$\int_{U(N)}Q_t(xy)Q_s(y^{-1}z)\,dy=Q_{t+s}(xz)$$
of the heat kernel makes it possible to combine adjacent faces $F_1$ and $F_2$, sharing a common edge $e$ as follows:
\begin{equation}\label{E:Qconv}
\int_{U(N)}Q_{\frac{1}{N}|F_1|}\bigl(x(\partial F_1)\bigr)Q_{\frac{1}{N}|F_2|}\bigl(x(\partial F_2)\bigr)\,dx(e)=Q_{\frac{1}{N}|F|}\bigl(x(\partial F)\bigr),\end{equation}
where $F$ is formed by coalescing $F_1$ and $F_2$ and deleting $e$.  This can be used on the right side of (\ref{E:ymexpec}) for  every edge $e$ that does not appear in any of the loops $c_i$, thereby eliminating all such edges from the integration. What is left is
an integration over  the `gauge fields' $x$  defined on the graph specified by the intersection points of the loops $c_i$ and the segments of the $c_i$ running between these intersection points.  Consider the   case where $\Gamma$ provides a triangulation of the unit disk $D$ and $l$ is a loop of the form $[p,o]\cdot\partial D\cdot [o,p]$, where $p$ is on the boundary of $D$ and $\partial D$ traces out this boundary in the positive sense, starting at $p$. Then, eliminating all edges not on $\partial D$, we have
\begin{equation}\label{E:fhlD}
\int f(h_l)\,d\mbp_N=\int_{U(N)}f(y)\,Q_{\frac{1}{N}|D|}(y)dy,\end{equation}
for all bounded measurable central functions $f$ on $U(N)$ (centrality allows us to `cancel off' the edges on $[o,p]$ against those on $[p,o]$). Next, suppose $l$ is of the form $[o,p]\cdot c\cdot [p,o]$, where $c$ is a basic simple closed curve based at $p$ enclosing a region $S$. By drawing sufficiently many radial and cross radial arcs we obtain a grid part of which provides a triangulation $T$ of $S$. There is a  (Jordan-Sch\"onflies) homeomorphism of $S$ onto the unit disk and, by further results from topology (the two-dimensional Hauptvermutung) , there is a homeomorphism which carries a subdivision of $T$ to a subdivision of the triangulation of $D$ specified by, say, two radial segments along with two arcs on $\partial D$. Putting all this together with (\ref{E:fhlD}) it follows that
\begin{equation}\label{E:fholjord}
\int f(h_l)\,d\mbp_N=\int_{U(N)}f(y)\,Q_{\frac{1}{N}|S|}(y)dy\end{equation}
for all bounded measurable central functions $f$ on $U(N)$. Aside from the proof, this identity is easy to verify in most examples using Theorem \ref{T:unholcigrid} and the convolution property (\ref{E:Qconv}).

Here is a particularly interesting consequence of (\ref{E:fholjord}) (see
\cite[eq. (5.13)]{Sen08a}):

\begin{equation}\label{E:fholjord2}
\lim_{N\to\infty}\int {\rm tr}_N(h_l^k)\,d\mbp_N=e^{- \frac{k}{2}|S|}P_k(|S|),\end{equation}
where $k$ is any non-negative integer, $P_k$ is the polynomial defined in (\ref{diffeqPk}),  and $\tr_N=\frac{1}{N}\tr$ is the normalized trace on $N\times N$ matrices (for negative $k$ simply note that $h_l^{-1}$ has the same distribution as $h_l$). Comparing with (\ref{E:distribux}) we observe that
\begin{equation}\label{E:tauhol}
\tau(u_l^k)=\lim_{N\to\infty}\tr_N(h_l^k),\end{equation}
for all positive integers $k$,
at least when $l$ is a standard lasso.

There is another important feature of $U(N)$ planar quantum Yang-Mills theory that we need to note. {\em If $c_1,\ldots, c_m$ are basic simple closed loops, based at $o$, with enclosing disjoint regions  then $h_{c_1},\ldots, h_{c_m}$ are independent variables.}
This too follows from  (\ref{E:ymexpec})  by eliminating all edges that do not lie on the loops $c_i$.

As the discussion above indicates, the loop expectation values (\ref{E:ymexpec}) depend only on the areas of the bounded regions in the complement of the paths $c_i$ and the topologies of these regions.  For a general theory of such topological probability field theories see L\'evy \cite{Le10} (earlier works include \cite{Le03, Se3, Se7}).

 \section{The Large-$N$ Limit and Free Independence of Holonomies}\label{s:lnl}

In this section we show that the large-$N$ limit of the planar $U(N)$ quantum Yang-Mills theory is indeed the free theory.  For finite $N$, we consider matrix-valued random variables $A$ for which we use the `trace functional'
\begin{equation}\label{E:EAN}
\tau_N(A)=\mbe[\tr_N(A)].\end{equation}

 For a basic loop $c$ in $\mbr^2$,  let $h_c$ be the $U(N)$-valued stochastic holonomy  given by (\ref{E:defhcT1}) and by the description before Theorem \ref{T:unholgrid}, and let $u_c$ be its free analog as in Definition \ref{d:hol}.

For a basic lasso $l$ the variable $h_l$ has density $Q_{\frac{1}{N}|S|}(x)$,  where $S$ is the interior of the lasso (region enclosed by the lasso head).  Wigner's semicircle law \cite{Wig55,Wig58} implies that
$$h_l\to u_l,\quad\hbox{as $N\to\infty$}$$
in distribution (in fact we have already observed this in (\ref{E:fholjord2})). Voi\-cu\-les\-cu's theorem \cite{Vo91} in this context implies that if $l_1,\ldots, l_n$ are loops for which $h_{l_1},\ldots, h_{l_n}$ are independent
as $U(N)$-valued random variables, for each $N$,   then $ u_{l_1},\ldots, u_{l_n} $ are freely independent and
\begin{equation}\label{E:hcucn}
(h_{l_1}, h_{l_1}^\ast,\ldots, h_{l_n}, h_{l_n}^\ast)\to (u_{l_1}, u_{l_1}^\ast,\ldots, u_{l_n}, u_{l_n}^\ast),\quad\hbox{as $N\to\infty$,}\end{equation}
in distribution.

 \begin{theorem}\label{T:largeN} Let $c_1\ldots, c_n$ be basic loops, all based at $o$, with finitely many mutual intersection points.  Then
\begin{equation}\label{E:hcucn2}(h_{c_1}, h_{c_1}^\ast,\ldots,h_{c_n}, h_{c_1}^\ast)\to (u_{c_1}, u_{c_1}^\ast,\ldots,u_{c_n}, u_{c_n}^\ast),\quad\hbox{as $N\to\infty$,}\end{equation}
in distribution, where $u_c$ denotes the free   holonomy around a loop $c$ and $h_c$  the
$U(N)$-valued stochastic holonomy.
 \end{theorem}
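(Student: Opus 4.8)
The strategy is to squeeze the whole family $c_1,\dots,c_n$ into a single grid, rewrite each $c_i$ as a fixed word in the minimal lassos of that grid, and then deduce the joint convergence from the independence/asymptotic-freeness already in hand (Theorems~\ref{T:unholgrid} and \ref{T:fregrid}) together with Voiculescu's theorem in the form \eqref{E:hcucn}.

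First I would choose a grid $\Gamma$ adapted to $c_1,\dots,c_n$. Each $c_i$ is a composite of finitely many radial pieces, cross radial arcs, and reverses, and the $c_i$ have only finitely many mutual intersection points; collecting the angles at which these cross radial arcs begin or end, the angles of the radial pieces, the angular coordinates of the intersection points, and the angular coordinates of the crossings between cross radial pieces of different $c_i$, one obtains a finite partition $0=\theta_0<\dots<\theta_N=2\pi$ after which, over each sector $[\theta_{k-1},\theta_k]$, all the cross radial pieces occurring in the $c_i$ are pairwise non-crossing, hence linearly ordered. Adjoining the radial edges joining consecutive endpoints along each ray $\theta=\theta_k$ and the radial segments along $\theta=0$ (and, if necessary, a few more cross radial arcs so that every endpoint of a radial piece of some $c_i$ becomes a vertex) yields a grid $\Gamma$ in the sense of Definition~\ref{D:grid} in which each $c_i$ is, after this subdivision, a loop based at $o$. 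This step is purely geometric and combinatorial; it is the one place that needs a little care, but the construction is routine.

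Next, by Proposition~\ref{P:loopsgrid}(iii) each loop $c_i$ in $\Gamma$ is backtrack equivalent to a composite of minimal lassos $l_1,\dots,l_m$ of $\Gamma$ and their reverses, where $l_1,\dots,l_m$ are listed so that the interior of $l_k$ is disjoint from the cone of $l_j$ whenever $j<k$. Because both the free holonomy $u$ and the stochastic holonomy $h$ are multiplicative under composition, send a reverse to an inverse, are trivial on radial paths, and are invariant under backtrack equivalence (for $u$ noted after Definition~\ref{d:hol}; $h$ is defined the same way in Definition~\ref{d:stochhol}), there is, for each $i$, one and the same noncommutative word $W_i$ in its arguments and their inverses with
$$h_{c_i}=W_i(h_{l_1},\dots,h_{l_m}),\qquad u_{c_i}=W_i(u_{l_1},\dots,u_{l_m}).$$
Since all the elements involved are unitary, inverses are adjoints, so every mixed $*$-moment of the tuple $(h_{c_1},h_{c_1}^\ast,\dots,h_{c_n},h_{c_n}^\ast)$ is literally a mixed $*$-moment of $(h_{l_1},h_{l_1}^\ast,\dots,h_{l_m},h_{l_m}^\ast)$, and likewise on the free side with $u$ in place of $h$.

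Finally I would invoke the limit theorems. By Theorem~\ref{T:unholgrid} the variables $h_{l_1},\dots,h_{l_m}$ are independent for every $N$, with $h_{l_j}$ governed by the (central) heat kernel $Q_{\frac1N|S_j|}$ on $U(N)$; each $h_{l_j}$ converges in $*$-distribution to $u_{l_j}$, which by Theorem~\ref{T:fregrid} is multiplicative semicircular with parameter $|S_j|$ (cf. \eqref{E:fholjord2}, \eqref{E:distribux}). Centrality of the heat kernel lets us replace each $h_{l_j}$ by a Haar-unitary conjugate of itself without changing the joint distribution, so Voiculescu's asymptotic freeness theorem — in the form \eqref{E:hcucn} — gives $(h_{l_1},h_{l_1}^\ast,\dots,h_{l_m},h_{l_m}^\ast)\to(u_{l_1},u_{l_1}^\ast,\dots,u_{l_m},u_{l_m}^\ast)$ in distribution, with the $u_{l_j}$ freely independent (consistently with Theorem~\ref{T:fregrid}). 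Convergence in $*$-distribution of a generating tuple implies convergence of every fixed mixed $*$-moment, and by the previous paragraph these coincide with the mixed $*$-moments of the $h_{c_i}$-tuple and of the $u_{c_i}$-tuple respectively; hence \eqref{E:hcucn2} follows. The main obstacle is the first step — producing a grid in which the cross radial pieces of all the $c_i$ become non-crossing within sectors — while everything after it is bookkeeping plus the cited theorems.
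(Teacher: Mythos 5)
Your proposal is correct and follows essentially the same route as the paper's proof: form a grid adapted to the $c_i$, invoke Proposition~\ref{P:loopsgrid} to write each loop as a fixed word in minimal lassos and their reverses, and pass to the limit using the independence of the lasso holonomies together with Voiculescu's asymptotic freeness in the form \eqref{E:hcucn}. Your write-up is somewhat more explicit than the paper's (in particular about the identical word $W_i$ appearing on both the $h$- and $u$-sides and about the reduction to mixed $\ast$-moments), but the argument is the same.
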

 \begin{proof} By subdividing cross radial paths appearing in the $c_j$, as necessary, we can assume that distinct cross radial pieces intersect
 at most at one or both endpoints.  Draw radial paths from the origin to the endpoints of   cross radial pieces of all the loops $c_j$, as well as the initial ray $\theta=0$.   These constructions produce a grid $\Gamma$.  By Proposition \ref{P:loopsgrid} there is
 a sequence of standard lassos $l_{1},\ldots,l_M$, with disjoint interiors, such that each $c_j$ is backtrack equivalent to a composite of these lassos and their
 reverses.
 From the observations made above for (\ref{E:hcucn}), we have
 $$(h_{l_1},h_{l_1}^\ast, \ldots,h_{l_M}, h_{l_M}^\ast)\to (u_{l_1},u_{l_1}^\ast\ldots,u_{l_M}, u_{l_M}^\ast) \quad\hbox{as $N\to\infty$}.$$
 Since each $h_{c_j}$ is a product of some of the $h_{l_k}$ and their inverses, and $u_{c_j}$ is the product of the corresponding unitaries $u_{l_k}$ and their inverses, the limit in the
 relation (\ref{E:hcucn2}) follows.
 \end{proof}

\begin{theorem}\label{T:holosloop} Suppose $l$ is a basic    closed curve, comprised of an initial radial ray from $o$ to a point $p$, followed
by a simple closed curve $c$ based at $p$, followed by the radial path back to $o$.
Then $u_{l}$ has multiplicative semicircular with parameter given by the area enclosed by the loop $c$.
\end{theorem}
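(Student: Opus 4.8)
The plan is to reduce the general lasso $l$ to the grid situation of Theorem~\ref{T:fregrid} and then to reassemble $u_l$ from the freely independent lasso holonomies. First I would build a grid around $l$ exactly as in the proof of Theorem~\ref{T:largeN}: since $l$ is basic it is a composite of radial paths, cross radial arcs, and their reverses; after subdividing the cross radial pieces so that distinct ones meet only at endpoints, and adjoining the radial segments from $o$ to every endpoint of a cross radial piece together with the initial ray $\theta=0$, I obtain a grid $\Gamma$ of which $l$ is a loop based at $o$. Refining $\Gamma$ further if necessary (as in the triangulation argument preceding \eqref{E:fholjord}), I may assume that the region $S$ enclosed by the simple closed curve $c$ is a union of interiors of minimal lassos of $\Gamma$; listing those lassos as $l_{k_1},\dots,l_{k_r}$ with enclosed regions $S_{k_1},\dots,S_{k_r}$, these regions are pairwise essentially disjoint and $|S_{k_1}|+\dots+|S_{k_r}|=|S|$.

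The next, and key, step is to identify $l$ up to backtrack equivalence with the composite of exactly these minimal lassos, say $l\simeq_{\rm bt} l_{k_r}\cdots l_{k_1}$, with the ordering chosen so that the interior of $l_{k_i}$ is disjoint from the cone of $l_{k_j}$ for $j<i$ (the lexicographic ordering of Proposition~\ref{P:loopsgrid}). Proposition~\ref{P:loopsgrid}(iii) already gives a decomposition of $l$ into minimal lassos and their reverses; what has to be shown is that, because $c$ is a \emph{simple} closed curve, no reverses and no repetitions occur and only the lassos enclosing faces of $S$ appear. I would prove this by a peeling induction on $r$: a face of $\Gamma$ lying in $S$ and meeting the boundary curve $c$ can be "pushed in", replacing $c$ by the simple closed curve bounding $S$ with that face removed and peeling off the corresponding (outermost) minimal lasso; the base case $r=1$ is immediate from the definition of a minimal lasso and the fact that holonomy ignores backtracks. (Alternatively this can be read off L\'evy's normal form for loops in graphs on surfaces \cite{Le10}.) Granting this, and using again that holonomy ignores backtracks together with Definition~\ref{d:hol}, we get $u_l=u_{l_{k_r}}\cdots u_{l_{k_1}}$.

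Now I invoke Theorem~\ref{T:fregrid}: the holonomies $u_{l_{k_1}},\dots,u_{l_{k_r}}$ are freely independent and $u_{l_{k_i}}$ is multiplicative semicircular with parameter $|S_{k_i}|$. To assemble them I would realize this free family as the multiplicative increments of a free multiplicative Brownian motion. Let $t\mapsto u_b(t)$ be the free multiplicative Brownian motion on $[0,|S|]$ clocked by $t$ (Proposition~\ref{P:unitbr}), put $s_0=0$ and $s_i=|S_{k_1}|+\dots+|S_{k_i}|$; by Proposition~\ref{P:unitbr} the increments $u_b(s_i)u_b(s_{i-1})^\ast$ are freely independent and $u_b(s_i)u_b(s_{i-1})^\ast$ is multiplicative semicircular with parameter $s_i-s_{i-1}=|S_{k_i}|$. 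Since a free family is determined in joint distribution by its marginals, $(u_{l_{k_1}},\dots,u_{l_{k_r}})$ and $\bigl(u_b(s_1)u_b(s_0)^\ast,\dots,u_b(s_r)u_b(s_{r-1})^\ast\bigr)$ have the same joint distribution, so $u_l=u_{l_{k_r}}\cdots u_{l_{k_1}}$ has the same distribution as the telescoping product $u_b(s_r)u_b(s_{r-1})^\ast\cdots u_b(s_1)u_b(s_0)^\ast=u_b(|S|)$, which is multiplicative semicircular with parameter $|S|$, the area enclosed by $c$.

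The step I expect to be the main obstacle is the second one: turning the planar topology of the simple closed curve $c$ (Jordan curve theorem, faces of the planar graph $\Gamma$) into the statement $l\simeq_{\rm bt}l_{k_r}\cdots l_{k_1}$ with no cancellation and no extraneous lassos. Proposition~\ref{P:loopsgrid}(iii) gives a decomposition into minimal lassos and reverses, but ruling out the reverses and repetitions is exactly where simplicity of $c$ must be used, via the peeling argument or L\'evy's analysis of loops in graphs. Everything downstream is bookkeeping together with the standard fact, encoded in Proposition~\ref{P:unitbr}, that free multiplicative Brownian motion has freely independent stationary multiplicative increments.
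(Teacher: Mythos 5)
Your step 2 is where the proof breaks, and not merely because the peeling argument is unfinished: the claim itself is false. The backtrack-equivalence classes of loops in a grid based at $o$ form a free group freely generated by the minimal lassos (the radial segments form a spanning tree), so the reduced word representing $l$ is unique; for many simple closed curves it necessarily contains lassos whose interiors lie \emph{outside} the enclosed region $S$. Concretely, take two adjacent angular sectors with cross radial arcs at radii $1$ and $2$, giving minimal lassos $l_{11},l_{12}$ (inner/outer cell of the first sector) and $l_{21},l_{22}$ (second sector), and let $c$ bound the region consisting of the outer cell of the first sector together with both cells of the second sector: tracing $c$ from a point at radius $2$ one traverses the two outer arcs forward, comes radially in to the origin, goes back out to radius $1$, and traverses the inner arc of the first sector in reverse. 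Reading off the non-tree edges (equivalently, running the procedure in Proposition \ref{P:loopsgrid}(iii)) gives, in chronological order, the word $(l_{12}l_{11})(l_{22}l_{21})l_{11}^{-1}$, which is already reduced: the non-enclosed lasso $l_{11}$ appears and does not cancel in the free group, so $l$ is \emph{not} backtrack equivalent to a product of the enclosed lassos each once. Hence your final step (matching marginals of a free family with increments of free multiplicative Brownian motion and telescoping) does not apply as stated; the object to analyze is a word with conjugating letters, e.g.\ $u_{l_{11}}^{\ast}\,(u_{l_{21}}u_{l_{22}})\,u_{l_{11}}\,u_{l_{12}}$ in this example. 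One can still get the right distribution here using Lemma \ref{L:invar} (conjugation by a free unitary preserves distribution and freeness), but doing this for an arbitrary simple loop amounts to proving, by induction on the word structure, a free analogue of the lattice Markov property --- a substantial argument that neither your peeling sketch nor Proposition \ref{P:loopsgrid} provides.

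The paper's proof of Theorem \ref{T:holosloop} sidesteps exactly this difficulty: it uses only the crude decomposition of Proposition \ref{P:loopsgrid}(iii) (reverses and extraneous lassos allowed), invokes Theorem \ref{T:largeN} to conclude $h_l\to u_l$ in distribution, and then identifies the limit from the finite-$N$ theory, where (\ref{E:fholjord}) says $h_l$ has heat-kernel density $Q_{|S|/N}$ on $U(N)$ and (\ref{E:fholjord2}) says the $N\to\infty$ moments are those of the multiplicative semicircular law with parameter $|S|$. In other words, the topological content --- that only the enclosed area matters --- is imported from the $U(N)$ lattice theory rather than re-proved inside free probability. Your grid construction and the reassembly of freely independent multiplicative semicircular factors into $u_b(|S|)$ are fine as far as they go, but since they rest on the false factorization, the proposal as written does not establish the theorem.
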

\begin{proof} Draw radial paths from the origin $o$ to the initial and final points of all cross radial paths which appear in the loop $c$.  Add more radial paths and cross radial arcs to form a grid.  Let $l_1,\ldots, l_m$ be the minimal lassos of the grid listed in sequence so that  the interior of $l_k$ is disjoint from the cone of $l_j$ is $j<k$. Then, by Proposition
\ref{P:loopsgrid}, the loop $l$ is    backtrack equivalent to a composite of some of the $l_j$ and their reverses. Denoting by $h_a$ the
$U(N)$-valued   stochastic holonomy around a basic loop $a$, then by Theorem \ref{T:largeN} we have
$$(h_{l_1},h_{l_1}^\ast,\dots,h_{l_m}, h_{l_m}^\ast)\to (u_{l_1},u_{l_1}^\ast,\ldots, u_{l_m}, u_{l_m}^\ast)$$
in distribution,
as $N\to\infty$. Hence, writing $l$ as a composite of the $l_j$ and their reverses, it follows that $h_l\to u_l$ in distribution, as $N\to\infty$.
But from (\ref{E:fholjord}) of   the $U(N)$ theory, $h_l$ has density $Q_{S/N}$ on $U(N)$, where $S$ is the area enclosed by the loop $c$. Hence,
the free limit $u_l$ is semicircular with parameter $S$.
\end{proof}

\begin{theorem}\label{T:holosloopsfree}  Suppose $c_1,\ldots, c_n$ are basic simple closed loops in $\mbr^2$, all based at the origin $o$, with disjoint interiors. Then
$u_{c_1},\ldots, u_{c_m}$ are freely independent.
\end{theorem}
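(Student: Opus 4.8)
The plan is to reduce the statement to Theorem~\ref{T:fregrid} by a grid-and-lasso argument exactly parallel to the proofs of Theorems~\ref{T:largeN} and \ref{T:holosloop}. First I would refine the loops: by subdividing the cross radial pieces appearing in each $c_j$ as needed, arrange that distinct cross radial pieces meet only at endpoints, then draw radial segments from $o$ to every endpoint of every cross radial piece and include the initial ray $\theta = 0$; this produces a grid $\Gamma$ in which each $c_j$ is a loop. Next, apply Proposition~\ref{P:loopsgrid}(iii): the minimal lassos of $\Gamma$ can be listed as $l_1, \ldots, l_M$ with the interior of $l_k$ disjoint from the cone of $l_j$ whenever $j < k$, and each $c_j$ is backtrack equivalent to a composite of some of these $l_k$ and their reverses. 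By Theorem~\ref{T:fregrid}, $u_{l_1}, \ldots, u_{l_M}$ are freely independent, with $u_{l_k}$ multiplicative semicircular with parameter $|S_k|$.

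The crux is then to show that the subalgebras generated by $u_{c_1}, \ldots, u_{c_n}$ are free given that the $c_j$ have \emph{disjoint interiors}. The key observation is that because the $c_j$ are simple closed loops with pairwise disjoint interiors, the minimal lassos constituting $u_{c_j}$ (after the backtrack reduction) involve only the minimal lassos of $\Gamma$ whose cones and interiors lie inside (or on the boundary of) the region enclosed by $c_j$; in particular, for $j \neq j'$ the index sets of minimal lassos entering $u_{c_j}$ and $u_{c_{j'}}$ are disjoint, or more precisely the sub-collections $\{u_{l_k}\}$ used for different $j$ are drawn from disjoint blocks of a freely independent family. Since a product of elements from one block of a freely independent family generates a subalgebra, and distinct blocks of a free family generate free subalgebras, it follows that $u_{c_1}, \ldots, u_{c_n}$ are freely independent. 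One should phrase this carefully: freeness of the $u_{l_k}$ means freeness of the unital subalgebras they generate, and grouping a free family into disjoint sub-families preserves freeness of the generated subalgebras; each $u_{c_j}$ lies in the subalgebra generated by its associated sub-family.

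The main obstacle I expect is the bookkeeping needed to justify that the minimal lassos appearing in the backtrack-reduced expression for $u_{c_j}$ are indeed confined to those associated with the interior of $c_j$, so that the index sets are genuinely disjoint across $j$. This requires a clean description of which $l_k$ occur: when one traverses $c_j$ and replaces each cross radial edge $e$ by the composite of minimal lassos $l_{e}$ as in the discussion following equation~(\ref{E:rij}), the lassos that arise have outer arcs among the cross radial edges of $c_j$ itself, and since $c_j$ is simple and encloses a region disjoint from the regions of the other loops, those lassos' cones and interiors are localized accordingly. I would spell this out using Proposition~\ref{P:loopsgrid}(iii) together with the remark that $l_e$ is backtrack equivalent to $l_{j1} \cdots l_{jk}$, and the fact that backtrack equivalent loops have the same holonomy (the remark after Definition~\ref{d:hol}). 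Once the disjointness of index sets is in place, the conclusion is immediate from Theorem~\ref{T:fregrid} and the standard fact that disjoint sub-families of a free family generate free subalgebras.
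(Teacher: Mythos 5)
Your setup (refine to a grid, decompose each $c_j$ into minimal lassos via Proposition~\ref{P:loopsgrid}, invoke Theorem~\ref{T:fregrid}) matches the paper's, but your route to the conclusion is genuinely different from the paper's and, as written, has a gap at exactly the step you flag as the crux. The paper does \emph{not} attempt to show that the lasso index sets for different $c_j$ are disjoint. Instead it observes that the $U(N)$ stochastic holonomies $h_{c_1},\ldots,h_{c_n}$ are classically independent --- this comes from the lattice formulation (\ref{E:ymexpec}) together with the heat-kernel convolution identity (\ref{E:Qconv}), by integrating out all edges not lying on the loops --- and then combines the joint convergence of Theorem~\ref{T:largeN} with asymptotic freeness of independent, conjugation-invariant unitary ensembles. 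That detour through the finite-$N$ theory is precisely what lets the paper avoid the combinatorial localization claim on which your argument rests.

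The gap in your version is that the justification you sketch for the localization claim is based on a false assertion. When a cross radial edge $e=r_{jk}$ of $c_j$ is replaced by $l_e\simeq_{\rm bt} l_{j1}\cdots l_{jk}$, the minimal lassos that appear are those of \emph{all} faces of the grid lying in the cone below $e$; their outer arcs are the grid edges $r_{j1},\ldots,r_{j,k-1}$, which in general are not edges of $c_j$ at all (they may be edges contributed by the other loops $c_{j'}$), and the corresponding faces generally lie \emph{outside} the interior of $c_j$. So the unreduced word for $c_j$ does contain lassos of exterior faces; what is true is that after backtrack reduction these extraneous letters cancel, leaving exactly the lassos of the faces tiling the interior of $c_j$ (each once). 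That cancellation is the real content --- it amounts to the statement that in the free group of reduced loops of a planar graph, a simple loop reduces to the ordered product of the lassos of the faces it encloses (cf.\ L\'evy \cite{Le10}) --- and neither your sketch nor anything already proved in the paper supplies it. If you prove that lemma, your argument closes and yields a purely free-probabilistic proof that bypasses the $U(N)$ approximation entirely; otherwise you should fall back on the paper's route via classical independence and asymptotic freeness. (A minor further point: even for the surviving lassos, their \emph{cones} reach down to the origin and need not lie in the interior of $c_j$; this is harmless for your argument, since Theorem~\ref{T:fregrid} already gives freeness of the full family $u_{l_1},\ldots,u_{l_M}$ and you only need disjointness of the index sets.)
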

\begin{proof}   Form a grid $\Gamma$ by using all cross radial segments of the paths $c_i$ and all radial lines running from $o$ to the endpoints of   these cross radial segments, as well as the initial ray $\theta=0$.  By Proposition \ref{P:loopsgrid}   there are standard lassos $l_1,\ldots, l_m$   with disjoint interiors, listed/indexed so that the interior of $l_k$
is disjoint from the cone of $l_j$ for $j<k$, and such that each $c_i$ is backtrack equivalent to a composite of the $l_j$ and their reverses.
Then the $U(N)$   stochastic holonomies $h_{c_i}$ converge jointly, in distribution,   to the corresponding
$u_{c_i}$. Now from $U(N)$-valued Yang-Mills theory, the variables $h_{c_1},\ldots, h_{c_n}$ are independent, and so their limits $u_{c_1},\ldots, u_{c_n}$ are
freely independent.
\end{proof}

\section*{Acknowledgments}  ANS also thanks Thierry L\'evy
and Kalyan B. Sinha
for useful discussions.


 \bibliographystyle{amsplain}


\def\cprime{$'$}

\end{document}